\def\th@plain{%
  \upshape 
}
\renewenvironment{proof}[1][\proofname]{\par
  \pushQED{\qed}%
  \normalfont \topsep6\p@\@plus6\p@\relax
  \trivlist
  \item[\hskip\labelsep
        \bfseries
    #1\@addpunct{.}]\ignorespaces
}{%
  \popQED\endtrivlist\@endpefalse
}
\numberwithin{equation}{section}
\newtheorem{thm}{Theorem}[section]
\newtheorem{lem}[thm]{Lemma}
\newtheorem{conj}[thm]{Conjecture}
\newtheorem{clm}{Claim}
\numberwithin{equation}{section}
\numberwithin{equation}{section}
\begin{document}

\title{Equitable partition of plane graphs with independent crossings into induced forests\footnotetext{Emails: B. Niu (beiniu@stu.xidian.edu.cn), X. Zhang (xzhang@xidian.edu.cn), Y. Gao (gaoyp@lzu.edu.cn)}}
\author{Bei Niu$^1$, \,\,\,\, Xin Zhang$^1$\thanks{Corresponding author.}~\thanks{Supported by the National Natural Science Foundation of China (11871055) and the Youth Talent Support Plan of Xi'an Association for Science and Technology (2018-6).}, \,\,\,\,Yuping Gao$^2$\thanks{Supported by the National Natural Science Foundation of China (11901263).}\\
{\small 1. School of Mathematics and Statistics, Xidian University, Xi'an 710071, China}\\
{\small 2. School of Mathematics and Statistics, Lanzhou University, Lanzhou 730000, China}
}

\maketitle

\begin{abstract}\baselineskip  0.6cm
The cluster of a crossing in a graph drawing in the plane is the set of the four end-vertices of
its two crossed edges. Two crossings are independent if their clusters do not intersect. In this paper, we prove that every plane graph with independent crossings has an equitable partition into $m$ induced forests for any $m\geq 8$. Moreover, we decrease this lower bound 8 for $m$ to 6, 5, 4 and 3 if we additionally assume that the girth of the considering graph is at least 4, 5, 6 and 26, respectively.

\noindent \textbf{Keywords}: equitable partition; vertex arboricity; planar graph; IC-planar graph
\end{abstract}

\baselineskip  0.6cm

\section{Introduction}

All graphs considered in this paper are finite and simple unless otherwise stated. By $V(G)$, $E(G)$, $\delta(G)$ and $\Delta(G)$, we denote the vertex set, the edge set, the minimum degree and the maximum degree of a graph $G$, respectively. In this paper, $|G|$ stands for $|V(G)|$, and $e(G)$ stands for $|E(G)|$. For two disjoint subsets  $S_1$ and $S_2$  of $V(G)$, $E(S_1,S_2)$ (resp.\,$e(S_1,S_2)$) is the set (resp.\,number) of edges that have one end-vertex in $S_1$ and another in
$S_2$. Under this notation, if $S_1$ consists of only one vertex $v$, then we use $e(v,S_2)$ instead of $e(\{v\},S_2)$.
The \emph{girth} $g(G)$ of a graph $G$ is the length of the shortest cycle in $G$, and is $+\infty$ if $G$ is a forest.
For other undefined notation, we refer the readers to \cite{Bondy.2008}.

An \emph{equitable partition} of a graph $G$ is a partition of $V(G)$ such that the sizes of any two parts differ by at most one. In 1970, Hajnal and Szemer\'edi \cite{Hajnal} answered a question of Erd\H{o}s by proving that every graph $G$ with maximum degree $\Delta$ has an equitable partition into $m$ independent sets for any integer $m\geq \Delta+1$.

Note that a star with maximum degree $\Delta$ has an equitable partition into $m$ stable sets for any $m\geq \lceil\frac{\Delta}{2}\rceil+1$, but it admits no equitable partition into $m$ independent sets for any $m<\lceil\frac{\Delta}{2}\rceil$. Therefore, finding a constant $c$ such that every planar graph has an equitable partition into $m$ independent sets for any $m\geq c$ is impossible. Surprisingly, if we ask for an equitable partition into induced forests rather than stable sets, we succeed. In 2005, Esperet, Lemoine and Maffray \cite{ELM.2015} confirmed a conjecture of Wu, Zhang and Li \cite{WZL.2013} by proving the following theorem.

\begin{thm}
Every planar graph has an equitable partition into $m$ induced forests for any $m\geq 4$.
\end{thm}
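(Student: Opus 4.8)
The plan is to argue by contradiction with the discharging method: take a minimal counterexample, show it contains no small ``reducible'' configuration, and then derive a contradiction from Euler's formula. Reducibility will be proved by a delete--reinsert argument in which the equitability is repaired by relocating a few vertices.

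Fix an integer $m\ge 4$, and suppose $G$ is a planar graph with the fewest vertices (and, subject to that, the fewest edges) that admits no equitable partition into $m$ induced forests; write $n=|V(G)|$. We may assume $n>m$, since otherwise one vertex per part works, and we may assume $G$ is connected. The first task is to extract structural consequences of minimality, each proved by a delete--reinsert argument: delete a bounded set $S$ of vertices, chosen large enough that the part-size profile of $G-S$ can be corrected to the profile $G$ demands upon inserting $|S|$ more vertices; partition $G-S$ by minimality; and reinsert the vertices of $S$ one at a time. When a reinserted vertex $v$ has degree at most $5$, at most two parts are ``forbidden'' for $v$ — a part $V_i$ being forbidden only if $v$ has two neighbours in one component of $G[V_i]$ — so at least $m-2\ge 2$ parts remain available, and one wishes to insert $v$ into an available part of smallest current size. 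Carrying this out for the configurations the discharging will eventually need — a vertex of degree at most $2$; an edge $uv$ with $d(u)+d(v)$ below a threshold; a vertex of degree $3$, $4$ or $5$ whose neighbours are all of low degree; and similar local patterns — shows that a minimal counterexample contains none of them.

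The delicate point, which I expect to be the main obstacle, is maintaining equitability through the reinsertions. After deleting $S$ and applying minimality, the part sizes need not match what $G$ requires, and it can happen that the unique part of smallest size is forbidden for the vertex being reinserted. To repair this one relocates a vertex from a slightly larger part into the smallest part — or performs a short chain of such relocations — without ever closing a cycle inside any part; the purpose of forbidding the reducible configurations is precisely to guarantee that such a legal relocation always exists, and this is where planarity does its work. Choosing $S$, the order of reinsertion, and the relocations so that every intermediate partition still consists of $m$ induced forests is the technical heart of the proof.

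With every reducible configuration excluded, I would finish by discharging. Assign charge $d(v)-6$ to each vertex $v$ and $2\ell(f)-6$ to each face $f$ of length $\ell(f)$; by Euler's formula the total charge is $-12$. I would then prescribe redistribution rules — faces donating charge to incident low-degree vertices, and so on — and verify, using the forbidden-configuration list, that after redistribution every vertex and every face carries nonnegative charge, contradicting the negative total. Hence no minimal counterexample exists, and the theorem follows. I expect the subtle engineering to be the calibration of the discharging rules against the reducibility arguments: the rules must be arranged so that the only unavoidable local configurations are exactly those the delete--reinsert--relocate procedure can handle.
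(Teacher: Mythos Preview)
This theorem is not proved in the present paper; it is quoted from Esperet, Lemoine and Maffray. The paper does, however, prove an IC-planar generalisation (its Theorem~1.2) by adapting the ELM method, so one can read off from Sections~3--4 what that method is --- and it is not discharging.

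The actual argument takes an \emph{edge}-minimal counterexample $G$ of order $mt$, chooses a low-degree edge $xx_1$, and uses the equitable tree-$m$-colouring of $G-xx_1$, in which $x$ lies in a class $V_1$ together with two of its neighbours. A short shifting argument shows that every $v\notin V_1$ satisfies $e(v,V_1\setminus\{x\})\ge 2$. Counting these edges against the bipartite-planar edge bound forces some $z\in V_1\setminus\{x\}$ to have two nonadjacent neighbours $y_1,y_2$ each with exactly two edges into $V_1\setminus\{x\}$; swapping $z$ out and $y_1,y_2$ in produces an induced forest $F_1$ of size exactly $t$, and the remaining $(m-1)t$ vertices induce a graph $G'$ with provably at least $(m-1)t+2$ fewer edges. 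One then inducts on the number of parts, peeling off one full class at a time. The equitability-repair problem that you flag as ``the delicate point'' is thus sidestepped entirely: no vertex is ever reinserted into a partition of the wrong profile, because the induction removes blocks of $t$ vertices rather than single vertices.

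Your proposal, by contrast, is an outline with the substantive content missing. You correctly identify that reinserting deleted vertices may violate equitability and propose to fix this by ``a short chain of relocations'', but you give no mechanism guaranteeing such a chain exists, and this is precisely where a local-configuration approach runs into trouble: equitability is a global constraint on part sizes, and it is far from clear that any finite list of forbidden local configurations suffices to guarantee the relocations you need. None is exhibited, no discharging rules are stated, and the phrases ``I expect'' and ``I would'' signal that the actual work has not been done. As it stands this is a plausible-sounding plan for a non-equitable forest-partition theorem, not a proof of the equitable one.
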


An open problem here is to determine whether every planar graph has an equitable partition into three induced forests (partial results on this problem can be found in \cite{Z2015}). If it is so, this number three is sharp.

A graph is \emph{1-planar} if it can be drawn in the plane so that each edge is crossed by at most one other edge, and a drawing satisfying this property so that the number of crossings is as few as possible is a \emph{1-plane graph}. The notion of $1$-planarity was introduced by Ringel \cite{Ringel.1965} while trying to simultaneously color the vertices and faces of a plane graph $G$ such that any pair of adjacent/incident elements receive different colors. Ringel \cite{Ringel.1965} showed that every 1-planar graph is 7-colorable, and Borodin \cite{Borodin.1984,Borodin.1995} improved it to the 6-colorability. Recently in 2017, Kobourov, Liotta and Montecchiani \cite{KLM2017} reviewed the current literature covering various research streams about 1-planarity, such as characterization and recognition, combinatorial properties, and geometric representations.

Clearly, every crossing $c$ in a 1-plane graph $G$ is generated by two mutually crossed edges $e_1$ and $e_2$. Thus, for every crossing $c$ there exists a vertex set $M_G(c)$ of size four, where $M_G(c)$, the \emph{cluster} of $c$, consists of the end-vertices of $e_1$ and $e_2$. For two distinct crossings $c_1$ and $c_2$ in a 1-plane graph $G$, it is clear that $|M_G(c_1)\cap M_G(c_2)|\leq 2$ (see \cite{Z2014}).

Let $G$ be a 1-plane graph. If $M_G(c_1)\cap M_G(c_2)=\emptyset$ for any two distinct crossings $c_1$ and $c_2$, then $G$ is a \emph{plane graph with independent crossings} (\emph{IC-plane graph}, for short). A graph that admits a drawing homeomorphic to an IC-plane graph is an \emph{IC-planar graph}. The IC-planarity was first considered by Albertson \cite{Albertson.2008} in 2008, who conjectured that every IC-planar graph is 5-colorable. This conjecture was confirmed by Kr\'al and Stacho \cite{KS2010} in 2010. Note that IC-planar graph can be non-planar.

In this paper, we consider the equitable partition problem of IC-planar graphs by proving the following.

\begin{thm}\label{1}
Every plane graph with independent crossings and with girth at least $g$ has an equitable partition into $m$ induced forests for any $m\geq \mathfrak{F}(g)$, where $$\mathfrak{F}(g)=\left\{
                                                    \begin{array}{ll}
                                                      8, & \hbox{if $g=3$;} \\
                                                      6, & \hbox{if $g=4$;} \\
                                                      5, & \hbox{if $g=5$;} \\
                                                      4, & \hbox{if $g=6$;} \\
                                                      3, & \hbox{if $g=26$.}
                                                    \end{array}
                                                  \right.$$
\end{thm}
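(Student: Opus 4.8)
The plan is to follow the now-standard discharging-plus-greedy framework used for equitable partitions into induced forests, adapting it to the IC-planar setting. For a fixed girth $g$ and $m\geq\mathfrak F(g)$, suppose for contradiction that $G$ is a vertex-minimal IC-plane graph with girth at least $g$ admitting no equitable partition into $m$ induced forests. First I would establish, via the structure of IC-plane graphs, a sparsity bound of the form $e(G)\leq \alpha(g)\,|G|-\beta(g)$ (or more precisely a bound on $\mad(G)$ after planarizing each crossing into a degree-4 vertex and using Euler's formula together with the girth hypothesis); this is where the IC-planarity — as opposed to mere planarity — is used, since independent crossings keep the planarization's girth and face structure controlled. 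From this I would extract a short list of reducible configurations: typically a vertex of small degree, or a small-degree vertex adjacent mostly to other small-degree or ``light'' vertices, or a crossing whose cluster vertices all have small degree. The exact thresholds will depend on $g$ via $\mathfrak F(g)$, and the girth-$26$ case for $m=3$ will require the most delicate configuration (a long path or cycle of $2$-vertices, as is familiar from arboricity-$2$ arguments for large girth).

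The core of the argument is the reduction step. Given a reducible configuration centered at a vertex $v$ (or a cluster), I would delete $v$ (or a suitable small set $W$), apply minimality to get an equitable partition $V_1,\dots,V_m$ of $G-W$ into induced forests, and then re-insert the deleted vertices one at a time, each time choosing a part that (i) does not create a cycle through the inserted vertex in the induced subgraph, and (ii) keeps the partition equitable after possibly swapping a vertex between two parts to fix the sizes. Condition (i) is guaranteed by a counting argument: if $v$ has degree $d$ and $d < $ (roughly) $2m$, then some part contains at most one neighbor of $v$ along each relevant ``crossing-free'' incidence, so adding $v$ to that part creates no cycle — here one must be careful that an edge incident to $v$ that is crossed still only contributes one potential cycle-closing neighbor. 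Condition (ii) is the classical equitability-repair lemma: if all parts have size $\lceil |G|/m\rceil$ or $\lfloor|G|/m\rfloor$ before insertion, one moves $v$ into a smallest part, and if that part was not a valid forest-choice, one performs a single exchange, which is possible because a vertex with few neighbors can always be relocated. I would state this repair step as a standalone lemma (``if $G-v$ has an equitable partition into $m$ induced forests and $d_G(v)\leq 2m-2$, with $v$ not ``too entangled'' at a crossing, then so does $G$'') and use it uniformly across all five girth cases.

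The discharging phase ties the two halves together: assign each vertex $v$ charge $d_G(v)-2\alpha(g)$ (and each face/crossing an appropriate charge so the total is negative by the sparsity bound), then show that if none of the reducible configurations occurs, every vertex and face ends with nonnegative charge — a contradiction. The discharging rules will send charge from high-degree vertices and large faces to low-degree vertices and to crossings; the independence of crossings is exploited to bound how much charge a single crossing can demand. I expect the main obstacle to be the $g=26$, $m=3$ case: there the slack is minimal, so the reducible configuration is not a single low-degree vertex but a longer substructure (a path of consecutive $2$-vertices, or a cycle all of whose vertices have degree $2$ except for a few), and both the reduction (reinserting an entire path while maintaining that each part induces a forest and sizes stay balanced) and the discharging (squeezing a contradiction out of girth $\geq 26$ rather than some smaller bound) need careful, case-heavy handling; the other four cases should be comparatively routine once the sparsity bound and the repair lemma are in place. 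A secondary technical nuisance throughout is that crossed edges mean two vertices in the same part can be ``adjacent in the drawing'' without being adjacent in $G$, so ``induced forest'' must be checked in $G$ itself and not in the planarization — I would keep this distinction explicit at every insertion step.
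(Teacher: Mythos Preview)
Your proposal diverges substantially from the paper's argument, and the divergence is not merely cosmetic: the discharging/reducible-configuration paradigm you outline does not mesh well with the \emph{global} nature of the equitability constraint, and the repair lemma you propose (``if $d_G(v)\le 2m-2$ then an equitable tree-$m$-partition of $G-v$ extends to $G$'') is not true as stated. Knowing that some part contains at most one neighbour of $v$ does not let you insert $v$ equitably, because that part need not be a smallest one; a single swap is in general not enough to fix this, and the paper spends essentially all of Section~3 (Claims~1--2, Lemma~3.1, Claims~3--4) building a delicate multi-step exchange that works only after a careful double-counting of edges between colour classes. No amount of local discharging will produce the right exchange chain, because the obstruction is not a local configuration but a global density inequality.

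Concretely, the paper argues by \emph{edge}-minimality (not vertex-minimality): it deletes a single edge $xx_1$ incident with a minimum-degree vertex, takes an equitable tree-$m$-colouring of $G-xx_1$ with classes $V_1,\dots,V_m$ of size exactly $t$ (after first reducing to $|G|=mt$ via Lemma~2.3), observes that $x$ and two of its neighbours lie in the same class $V_1$, and then shows by repeated counting against the sparsity bound $e(G)\le \frac{5g-2}{4g-8}|G|-\frac{2g}{g-2}$ that every vertex outside $V_1$ has at least two neighbours in $V_1\setminus\{x\}$. From this one extracts a vertex $z\in V_1\setminus\{x\}$ with two \emph{nonadjacent} neighbours $y_1,y_2$ outside $V_1$ each having exactly two neighbours in $V_1\setminus\{x\}$; swapping $z$ out and $y_1,y_2$ in yields an induced forest $F_1$ of size $t$, and the remaining $(m-1)t$ vertices induce a graph $G'$ with at least $(m-1)t+2$ fewer edges than $G$. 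An induction on the edge bound (Theorems~4.1--4.2) or a direct edge-count (Theorems~4.4--4.6) then finishes each girth case. Your expectation that $g=26$ is the hardest case is therefore exactly backwards: large girth makes the density bound so tight that the inequality $e(A,V'_1)\ge 2(m-1)t$ already exceeds $e(G)$ for $m\ge 3$, and the proof for $g=26$ is a two-line computation with no swap needed at all. Finally, the sparsity bound is obtained not by planarizing crossings into degree-$4$ vertices but simply by deleting one edge from each crossing (at most $\lfloor |G|/4\rfloor$ edges, by independence of crossings) and applying Euler's formula to the resulting planar graph of the same girth.
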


\section{Preliminaries}

If a graph $G$ has an equitable partition into $m$ induced forests, we say that $G$ is \emph{equitably tree-$m$-colorable}, and has an \emph{equitable tree-$m$-coloring}. Let $\mathcal{G}_g$ be the class of IC-plane graph with girth at least $g$. Note that $\mathcal{G}_3\supseteq \mathcal{G}_4 \supseteq  \mathcal{G}_5\supseteq \ldots\supseteq \mathcal{G}_{+\infty}$.

\begin{lem}\label{2}
If $G\in \mathcal{G}_g$, then $$e(G)\leq \frac{5g-2}{4g-8}|G|-\frac{2g}{g-2}.$$
\end{lem}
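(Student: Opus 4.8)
The plan is to bound the edge count of an IC-plane graph $G \in \mathcal{G}_g$ by associating to $G$ an auxiliary \emph{planar} graph and then invoking the standard Euler-formula bound for planar graphs of given girth. Recall that in an IC-plane graph the clusters of distinct crossings are pairwise disjoint, so the crossed edges form a matching-like structure: each crossing $c$ involves two edges $e_1(c), e_2(c)$ whose four endpoints $M_G(c)$ are not shared with any other crossing.

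First I would form the \emph{planarization} $G^{\times}$ by replacing each crossing $c$ with a new dummy vertex $v_c$ of degree $4$, splitting $e_1(c)$ and $e_2(c)$ into two half-edges each. This $G^{\times}$ is a genuine plane graph on $|G| + k$ vertices and $e(G) + 2k$ edges, where $k$ is the number of crossings. However, applying the girth bound directly to $G^{\times}$ is problematic because the planarization can create short cycles (length-$3$ or $4$ cycles through $v_c$) even when $g(G)$ is large. So instead I would work with a different auxiliary graph: delete one edge from each crossing pair, say remove $e_2(c)$ for every crossing $c$, obtaining a plane graph $H$ (no crossings remain) with $e(H) = e(G) - k$ and $g(H) \geq g(G) \geq g$. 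The Euler bound for plane graphs of girth at least $g$ gives $e(H) \leq \frac{g}{g-2}(|H| - 2) = \frac{g}{g-2}(|G| - 2)$, hence $e(G) \leq \frac{g}{g-2}(|G|-2) + k$. The remaining task is to bound $k$, the number of crossings, in terms of $|G|$.

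To bound $k$: since the clusters $M_G(c)$ are pairwise disjoint and each has size $4$, a crude bound is $k \leq |G|/4$; but this is too weak to yield the stated coefficient $\frac{5g-2}{4g-8}$. Instead, I expect one should bound $k$ using the girth again. Consider the subgraph on the crossed edges together with a suitable bounding structure: around each crossing $c$, the four endpoints of $e_1(c)$ and $e_2(c)$, being all distinct across crossings, can be connected in $H$ by short paths. With girth $\geq g$, the edges $e_1(c)$ together with the $e_2(c)$'s removed... more precisely, one replaces each crossing by a $4$-cycle through the four cluster vertices (this $4$-cycle consists of the four half-edges), but a cleaner approach is: build $H'$ from $H$ by, for each crossing, adding back $e_2(c)$ but subdividing it once with vertex $v_c$ and also adding the two edges of $e_1(c)$ through $v_c$ — i.e., take the planarization $G^{\times}$. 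Then count: $|G^{\times}| = |G| + k$, $e(G^{\times}) = e(G) + 2k$, and the face-counting must account for the fact that the only short cycles created have length exactly $4$ and pass through a dummy vertex. A discharging or direct face-length argument restricted to these $4$-faces, combined with the girth-$g$ bound on the rest, should yield the coefficient $\frac{5g-2}{4g-8}$. Concretely, I would set up $\sum_{f} (\ell(f) - ?) $ balanced against $\sum_v(\deg v - ?)$ via Euler's formula $|G^\times| - e(G^\times) + f(G^\times) = 2$, assigning the $4$-faces at dummy vertices their true length $4$ while all other faces have length $\geq g$; solving the resulting inequality for $e(G)$ in terms of $|G|$ is where the exact constants emerge.

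The main obstacle is the bookkeeping around the dummy vertices: one must argue carefully that \emph{every} face of $G^{\times}$ that is not bounded by a cycle of length $\geq g$ is one of the (at most $2k$, but really controlled) faces incident to dummy vertices, and that these contribute exactly the right deficiency. The independence of crossings is precisely what makes this clean — no two dummy vertices are adjacent or share a face in a bad way, so the "exceptional" faces are localized. Getting the constant $\frac{5g-2}{4g-8}$ rather than something weaker requires noting that each crossing, while adding $2$ to the edge count and $1$ to the vertex count, is "charged" against the girth constraint only locally; equating the per-crossing gain against the global Euler deficit of $\frac{2g}{g-2}$ baseline gives the stated bound after routine algebra.
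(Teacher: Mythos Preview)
Your first approach is exactly the paper's proof, and you abandon it by mistake. You correctly delete one edge from each crossing pair to obtain a plane graph $H$ with $e(H)=e(G)-k$ and $g(H)\ge g$, giving
\[
e(G)\le \frac{g}{g-2}(|G|-2)+k.
\]
You then assert that the ``crude bound'' $k\le |G|/4$ is too weak. It is not: plugging it in yields
\[
e(G)\le \frac{g}{g-2}|G|-\frac{2g}{g-2}+\frac{1}{4}|G|
=\left(\frac{4g+(g-2)}{4(g-2)}\right)|G|-\frac{2g}{g-2}
=\frac{5g-2}{4g-8}|G|-\frac{2g}{g-2},
\]
which is precisely the claimed inequality. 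The bound $k\le \lfloor |G|/4\rfloor$ is immediate from the definition of IC-planarity, since the clusters $M_G(c)$ are pairwise disjoint sets of size~$4$.

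So the gap in your proposal is purely arithmetical: a miscomputation led you to discard the correct, two-line argument and embark on a planarization/discharging scheme that is unnecessary (and which you do not actually carry out). The concerns you raise about short cycles through dummy vertices in $G^{\times}$ are real obstacles for that alternative route, but the whole detour is avoidable. Recheck the coefficient $\frac{g}{g-2}+\frac{1}{4}$ and you will see the lemma falls out immediately.
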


\begin{proof}
Since every IC-plane graph $G$ has at most $\lfloor\frac{1}{4}|G|\rfloor$ crossings by its definition, we can obtain a plane graph $G'$ with order $|G|$ via removing at most $\lfloor\frac{1}{4}|G|\rfloor$ edges from $G$. Since $g(G')\geq g(G)\geq g$, $e(G')\leq \frac{g}{g-2}\big(|G|-2\big)$ by the famous Euler's formula. Therefore, the required result holds since $e(G)\leq e(G')+\frac{1}{4}|G|$.
\end{proof}

\begin{lem}\label{3}
Let $G$ be a graph in $\mathcal{G}_g$.\\
(a) If $g=3$, then $\delta(G)\leq 6$;\\
(b) if $g=4$, then $\delta(G)\leq 4$;\\
(c) if $g\geq 5$, then $\delta(G)\leq 3$.
\end{lem}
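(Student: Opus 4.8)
The plan is to argue by contradiction, playing the handshaking (degree-sum) lower bound on $e(G)$ off against the upper bound supplied by \autoref{2}. Fix $G\in\mathcal{G}_g$ and suppose $\delta(G)\geq d+1$, where $d$ is the claimed bound for the relevant girth (so $d=6$ when $g=3$, $d=4$ when $g=4$, and $d=3$ when $g\geq 5$). Then $2e(G)=\sum_{v\in V(G)}\deg_G(v)\geq(d+1)|G|$, hence $e(G)\geq\frac{d+1}{2}|G|$, while \autoref{2} gives $e(G)\leq\frac{5g-2}{4g-8}|G|-\frac{2g}{g-2}$. Combining these, $\frac{d+1}{2}|G|\leq\frac{5g-2}{4g-8}|G|-\frac{2g}{g-2}$, and I will show in each case that the coefficient of $|G|$ on the right is at most $\frac{d+1}{2}$, so that $|G|$ is forced to be bounded above by a nonpositive number, which is absurd.

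For $g=3$ the bound reads $e(G)\leq\frac{13}{4}|G|-6$, whereas $\delta(G)\geq 7$ forces $e(G)\geq\frac{7}{2}|G|=\frac{14}{4}|G|$; subtracting yields $\frac14|G|\leq-6$, a contradiction. For $g=4$ the bound reads $e(G)\leq\frac{9}{4}|G|-4$, whereas $\delta(G)\geq 5$ forces $e(G)\geq\frac{5}{2}|G|=\frac{10}{4}|G|$; subtracting yields $\frac14|G|\leq-4$, again a contradiction. In both of these cases the leading coefficients agree and it is the (negative) constant term of \autoref{2} that closes the argument.

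For $g\geq 5$ I want $\delta(G)\leq 3$, so I suppose $\delta(G)\geq 4$, giving $e(G)\geq 2|G|$. The elementary point here is the inequality $\frac{5g-2}{4g-8}\leq 2$, valid for all integers $g\geq 5$ since it is equivalent to $5g-2\leq 8g-16$, i.e. $g\geq\frac{14}{3}$. Hence $e(G)\leq\frac{5g-2}{4g-8}|G|-\frac{2g}{g-2}\leq 2|G|-\frac{2g}{g-2}<2|G|$, contradicting $e(G)\geq 2|G|$. The case $g=+\infty$ (i.e. $G$ a forest) needs no computation at all, since then $\delta(G)\leq 1\leq 3$.

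There is no genuine conceptual obstacle: the proof is a short arithmetic verification built on \autoref{2}. The only mild care points are (i) in the range $g\geq 5$ it is the comparison of the \emph{leading} coefficients, not the constant terms, that produces the contradiction, so one must record the inequality $\frac{5g-2}{4g-8}\leq 2$; (ii) the forest case $g=+\infty$ should be dismissed separately since there $\frac{2g}{g-2}$ is only a limiting expression; and (iii) one implicitly assumes $G$ is nonempty so that $\delta(G)$ is defined.
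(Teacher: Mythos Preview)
Your argument is correct and is essentially the paper's own proof: the paper simply bounds the average degree by $\frac{5g-2}{2g-4}$ via \autoref{2} and observes $\delta(G)\le\lfloor\overline{d}(G)\rfloor$, which is the contrapositive of your handshake computation. One small expository slip worth fixing: for $g=3,4$ the leading coefficients do \emph{not} agree (you have $\tfrac{14}{4}$ versus $\tfrac{13}{4}$, and $\tfrac{10}{4}$ versus $\tfrac{9}{4}$), so it is the coefficient gap rather than the constant term that drives the contradiction---your displayed arithmetic is correct either way.
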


\begin{proof}
  The average degree $\overline{d}(G)$ of $G$ is $2e(G)/|G|$, and thus is at most $ \frac{5g-2}{2g-4}$ by Lemma \ref{2}. If $g=3$, then $\overline{d}(G)\leq 6.5$. If $g=4$, then $\overline{d}(G)\leq 4.5$. If $g\geq 5$, then $\overline{d}(G)<4$. Since $\delta(G)\leq \lfloor\overline{d}(G)\rfloor$, the results hold immediately.
\end{proof}

\begin{lem}\label{4}
Let $m\geq \mathfrak{G}(g)$ be a fixed integer, where $$\mathfrak{G}(g)=\left\{
                                                    \begin{array}{ll}
                                                      5, & \hbox{if $g=3$;} \\
                                                      3, & \hbox{if $g\geq 4$.}
                                                    \end{array}
                                                  \right.$$
If every graph in $\mathcal{G}_g$ of order $mt$ is equitably tree-$m$-colorable for any integer $t\geq 1$, then every graph in $\mathcal{G}_g$ is equitably tree-$m$-colorable.
\end{lem}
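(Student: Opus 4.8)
The plan is a padding argument: given an arbitrary graph $G\in\mathcal{G}_g$ of order $n$, we want to embed it into a larger graph $H\in\mathcal{G}_g$ whose order is a multiple of $m$, in such a way that an equitable tree-$m$-coloring of $H$ restricts to an equitable tree-$m$-coloring of $G$. Write $n=mq+r$ with $0\le r<m$. If $r=0$ we are done by hypothesis, so assume $1\le r\le m-1$. We need to add $m-r$ new vertices to reach order $m(q+1)$; the new vertices must be attached to $G$ (and to each other) by enough edges that, in \emph{any} tree-$m$-coloring of $H$, the new vertices are forced to occupy $m-r$ distinct color classes, while simultaneously few enough edges are added that $H$ still lies in $\mathcal{G}_g$ and no monochromatic cycle is created. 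The standard device is to add a clique $K_{m-r}$ on the new vertices (which forces them into $m-r$ distinct classes) together with a few edges joining these new vertices to $G$; then in any tree-$m$-coloring of $H$, each color class meets the new clique in exactly one vertex or not at all, so among the $m$ classes exactly $m-r$ have one extra vertex, hence the $m$ classes of $H$ restricted to $V(G)$ have the $r$ classes of size $q+1$ and the $m-r$ classes of size $q$ — an equitable partition of $G$. Each class of $H$ restricted to $V(G)$ is still a forest since it is an induced subgraph of a forest.

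First I would fix, for each residue $r\in\{1,\dots,m-1\}$, a suitable gadget. For $g=3$ and $m\ge 5$: since $m-r\le m-1$, a clique $K_{m-r}$ on $\le m-1$ vertices together with a single edge from one new vertex to an arbitrary vertex of $G$ keeps the crossing structure independent (the new part is drawn in a face, planar, with no crossings, so the crossings of $H$ are exactly those of $G$) and keeps girth $\ge 3$ trivially; one must only check $K_{m-r}$ with $m-r\ge 3$ may contain a triangle, which is allowed when $g=3$. When $m-r\le 2$ the gadget is an edge or a single vertex and we instead attach it by a short path so that it is forced into the right number of classes — for $m-r=2$ add the edge $xy$ and a third edge $xz$ to $z\in V(G)$, wait, two vertices $x,y$ with edge $xy$ already sit in two distinct classes, so nothing more is needed beyond one pendant edge to anchor; for $m-r=1$ a single new vertex joined by one edge to $G$ suffices. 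For $g\ge 4$ and $m\ge 3$ a clique is no longer allowed, so instead I would force the $m-r$ new vertices into distinct classes using a long cycle or a suitable bipartite-like gadget of girth $\ge g$: e.g.\ take the new vertices $v_1,\dots,v_{m-r}$ and, for each pair $i<j$, attach an internally disjoint path of length $\ge g$ between $v_i$ and $v_j$ — no, that creates short cycles through the $v_i$'s. Better: build a tree on the new vertices plus one long cycle through all of them of length a multiple making girth $\ge g$; one verifies that in any tree-$m$-coloring this configuration forces all $v_i$ pairwise differently colored, or, if a cheaper gadget is used, one argues directly. The cleanest route for $g\ge 4$: attach each new vertex $v_i$ by a single edge to $G$ at a carefully chosen vertex, and connect the $v_i$ among themselves by a path $v_1v_2\cdots v_{m-r}$ of length $m-r-1$; this alone does not force distinctness, so additionally subdivide into the argument that an equitable tree-$m$-coloring still exists by a direct extension rather than a forcing — i.e., prove that \emph{some} tree-$m$-coloring of $H$ makes the new vertices distinctly colored, using that $H\in\mathcal{G}_g$ has order $m(q+1)$ and invoking the hypothesis, then arguing by counting that the sizes work out.

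The main obstacle, and the step deserving the most care, is constructing for $g\ge 4$ (and $g\ge 5$) a padding gadget that simultaneously (i) keeps the host graph IC-plane with girth $\ge g$, (ii) is guaranteed to be embeddable into a plane face without disturbing the independent-crossing condition, and (iii) interacts correctly with the equitable count. Adding a clique is forbidden by the girth constraint, and adding long paths risks creating cycles of length $<g$ when they close up through vertices of $G$. The resolution is to add the new vertices together with edges so that the new vertices always lie on the outer face with all new edges drawn crossing-free, to bound the length of any new cycle from below by $g$ by making all new paths sufficiently long and attaching each new vertex to $G$ by at most one edge (so any cycle using a new vertex must traverse a long new path), and — for the forcing of distinct colors — to observe that we do not actually need the new vertices forced into distinct classes: it is enough that \emph{there exists} a tree-$m$-coloring of $H$ with the new vertices in distinct classes, which follows by first taking any equitable tree-$m$-coloring of $H$ (hypothesis), then noting that if two new vertices shared a class we could recolor one of them, since a new vertex has small degree (bounded by the gadget design) and a tree-$m$-coloring with $m\ge 3$ has enough room to avoid creating a monochromatic cycle locally. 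Verifying this recoloring is legitimate and that the orders and girth are as claimed for each residue class $r$ and each value of $g$ is the bulk of the work; the rest of the lemma is the bookkeeping in the paragraph above.
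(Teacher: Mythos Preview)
Your padding strategy has a genuine gap at its core: in a \emph{tree}-coloring, a clique does \emph{not} force its vertices into pairwise distinct classes. Two adjacent vertices may share a class (an edge is a forest); only three mutually adjacent vertices are forbidden from sharing a class. So a $K_{m-r}$ on the new vertices guarantees only that each class contains at most two of them, i.e.\ at least $\lceil (m-r)/2\rceil$ classes are hit, not $m-r$. Your fallback---take any equitable tree-$m$-coloring of the padded graph $H$ and then recolor a new vertex that collides---fails for a different reason: since $|H|=m(q+1)$, every class has size exactly $q+1$, so moving a single vertex produces classes of sizes $q$ and $q+2$ and destroys equitability. (There is also a side issue: you claim the new $K_{m-r}$ can be drawn ``planar, with no crossings'' in a face, but $K_{m-r}$ is non-planar once $m-r\ge 5$, which already occurs for $m\ge 6$, $r=1$.) For $g\ge 4$ the proposal never settles on a gadget, and the obstruction just described---you cannot force distinctness and you cannot recolor without losing equitability---is the real problem there too, not girth.

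The paper avoids all of this by arguing in the opposite direction: induct on $n=|G|$, and for $mt<n<m(t+1)$ \emph{delete} a minimum-degree vertex $v$ rather than add gadgets. By induction $G-v$ has an equitable tree-$m$-coloring with classes of size $t$ or $t+1$; since $d(v)\le\delta(g)$ (Lemma~\ref{3}), many classes contain at most one neighbour of $v$, and a short counting argument (using that at most $m-3$ classes have size $t+1$ when $n\le m(t+1)-2$, together with the bipartite edge bound from Lemma~\ref{2} in the $g=3$ case) shows $v$ can be placed---after moving at most one other vertex---into a size-$t$ class without creating a cycle. The single remaining case $n=m(t+1)-1$ is handled by the only piece of padding that \emph{does} work cleanly: add one isolated vertex, apply the hypothesis, and delete it.
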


\begin{proof}
Let $G$ be a graph in $\mathcal{G}_g$ with order $n$.
If $n\leq m$, then it is trivial that $G$ is equitably tree-$m$-colorable. Hence we assume that $n>m$, and next prove this lemma by induction on $n$ (assuming that the result holds for graphs in $\mathcal{G}_g$ with order less than $n$).

If $n$ is divisible by $m$, then the required result holds directly. Hence we assume that $mt<n<m(t+1)$ and $t\geq 1$ is an integer.

Let $v\in V(G)$ be a vertex with minimum degree. By the induction hypothesis, $G-v$ has an equitable tree-$m$-coloring $\phi$. Let $V_{1},~V_{2},\cdots,~V_{m}$ be the color classes of $\phi$, where $|V_{i}|=t$ or $t+1$ for all $i\geq1$.

If $n=m(t+1)-1$, then we add an isolated vertex $v$ to $G$. Clearly, the resulting graph $G'$ is an IC-plane graph of order $m(t+1)$. By the condition of this lemma, $G'$ has an equitable tree-$m$-coloring such that all color classes have the same size. Removing $v$ from $G'$, we obtain the graph $G$ with an equitable tree-$m$-coloring.

Hence in the following, we assume that $n\leq m(t+1)-2$.
Since $|G-v|=n-1\leq m(t+1)-3$, among $V_{1},~V_{2},\cdots,~V_{m}$ there are at most $m-3$ classes containing exactly $t+1$ vertices.

If $g=3$, then $d(v)\leq 6$ by Lemma \ref{3}(a). Therefore, there are at least $m-3$ color classes among $V_{1},~V_{2},~\cdots,~V_{m}$ satisfying $|N(v)\bigcap V_{i}|\leq1$. Without loss of generality, assume that $|N(v)\bigcap V_{i}|\leq1$ for all $4\leq i\leq m$. If $|V_{i}|=t$ for some $i\geq4$, then by adding $v$ to $V_{i}$, we get an equitable tree-$m$-coloring of $G$ (with color classes $V_{1},~\cdots,~V_{i-1},~V_{i}\bigcup \{v\},~V_{i+1},~\cdots,~V_{m})$. Hence we assume that $|V_{i}|=t+1$ for all $i\geq4$. This implies that $|V_1|=|V_2|=|V_3|=t$, since $|G-v|\leq m(t+1)-3$.

If there exists $u\in \bigcup_{i=4}^{m}V_{i}$ such that $e(u,V_{j})\leq1$ for some $1\leq j\leq3$, then by transferring $v$ to the color class containing $u$, and adding $u$ to $V_{j}$, we get an equitable tree-$m$-coloring of $G$. Hence, for any $u\in \bigcup_{i=4}^{m}V_{i}$ and any $V_{j}$ with $1\leq j\leq3$, we have $e(u,V_{j})\geq2$. This implies $e(G)\geq6(t+1)(m-3)$. Since $G[E(V_{1}\bigcup V_{2}\bigcup V_{3},\bigcup_{i=4}^{m}V_{i})]$ is a bipartite IC-plane graph (so it has girth at least 4), $e(G)\leq\frac{9}{4}[m(t+1)-2]-4$ by Lemma \ref{2}. Hence $\frac{9}{4}[m(t+1)-2]-4\geq6(t+1)(m-3)$. But, this is a contradiction for $m\geq5$.

If $g\geq 4$, then $d(v)\leq 4$ by Lemma \ref{3}(b). Therefore, there are at least $m-2$ color classes among $V_{1},~V_{2},~\cdots,~V_{m}$ satisfying $|N(v)\bigcap V_{i}|\leq1$. Without loss of generality, assume that $|N(v)\bigcap V_{i}|\leq1$ for all $3\leq i\leq m$.
Since $|G-v|\leq m(t+1)-3$, among $V_{3},~\cdots,~V_{m}$, there is at least one class, say $V_3$, containing exactly $t$ vertices. Therefore, by moving $v$ to $V_3$, we obtain an equitable tree-$m$-coloring of $G$.
\end{proof}

\section{The structures of the edge-minimal counterexample}

Let $G$ be an edge-minimal graph with $|G|=mt$ in the class $\mathcal{G}_g$  that is not equitably tree-$m$-colorable. Here we assume that $m\geq 8$ if $g=3$, $m\geq 6$ if $g=4$, $m\geq 5$ if $g=5$, $m\geq 4$ if $g=6$, and $m\geq 3$ if $g\geq 7$.
This section is devoted to exploring the structures of $G$, which will be later used to prove
Theorem \ref{1} by contradiction in the next section.

Clearly, $G$ contains a vertex of degree at least 1. Let $$\delta(g)=\left\{
                                                    \begin{array}{ll}
                                                      6, & \hbox{if $g=3$;} \\
                                                      4, & \hbox{if $g=4$;} \\
                                                      3, & \hbox{if $g\geq 5$.}
                                                    \end{array}
                                                  \right.$$
Since $\delta(G)\leq \delta(g)$ by Lemma \ref{3}, there is an edge $xx_1\in E(G)$ with $1\leq d(x)\leq \delta(g)$. By the minimality of $G$, $G-xx_1$ admits an equitable tree-$m$-coloring with $m$ color classes $V_1,V_2,\ldots,V_m$, each of which has size $t$.

Clearly, $xx_1$ is contained in a cycle of the subgraph induced by some color class, for otherwise the current coloring of $G-xx_1$ is just an equitable tree-$m$-coloring of $G$. Therefore, $x,x_1$ and another neighbor of $x$, say $x_2$, is contained in a same color class, say $V_1$, and then we assume that $N(x)\subseteq \cup_{i=1}^{\delta(g)-1}V_i$. Let $V'_1=V_1\setminus \{x\}$.

If $g=3$, then $d(x)\leq 6$. Since $x$ has two neighbors contained in $V_1$, among $V_2,V_3,V_4$ and $V_5$, at most two of them contains at least two neighbors of $x$. Hence we assume, without loss of generality, that $|N(x)\cap V_4|\leq 1$ and $|N(x)\cap V_5|\leq 1$.

If $g=4$, then $d(x)\leq 4$. Since $x$ has two neighbors contained in $V_1$, among $V_2$ and $V_3$, at most one of them contains at least two neighbors of $x$. Hence we assume, without loss of generality, that $|N(x)\cap V_3|\leq 1$.

If $g\geq 5$, then $d(x)\leq 3$. Since $x$ has two neighbors contained in $V_1$, $|N(x)\cap V_2|\leq 1$.

\begin{clm}\label{5}
(a) If $G\in \mathcal{G}_3$, then $e(v,V'_1)\geq 2$ for every $v\in \cup_{i=4}^m V_{i}$.\\
(b) If $G\in \mathcal{G}_4$, then $e(v,V'_1)\geq 2$ for every $v\in \cup_{i=3}^m V_{i}$.\\
(c) If $G\in \mathcal{G}_g$ with $g\geq 5$, then $e(v,V'_1)\geq 2$ for every $v\in \cup_{i=2}^m V_{i}$.
\end{clm}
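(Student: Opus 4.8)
I would argue by contradiction with the edge-minimality of $G$, using a single local recoloring. Suppose the claim fails, so that there is a vertex $v$ with $e(v,V'_1)\leq 1$ lying in some color class $V_j$, where $j\geq 4$ in case (a), $j\geq 3$ in case (b), and $j\geq 2$ in case (c). The first observation to record is that for every such index $j$ one has $|N(x)\cap V_j|\leq 1$: if $j$ is one of the indices singled out just before the claim (namely $4$ or $5$ when $g=3$, $3$ when $g=4$, $2$ when $g\geq 5$) this is exactly what was arranged there, while for all larger $j$ it is automatic because $N(x)\subseteq\bigcup_{i=1}^{\delta(g)-1}V_i$. I would also note the trivial fact that $v\notin\{x,x_1,x_2\}$, since $x,x_1,x_2\in V_1$ but $v\notin V_1$.

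Next I would recolor $G$ by interchanging $v$ and $x$ between their classes: put $W_1=V'_1\cup\{v\}$, $W_j=(V_j\setminus\{v\})\cup\{x\}$, and $W_i=V_i$ for every $i\notin\{1,j\}$. Every class still has exactly $t$ vertices, so the partition is equitable, and it remains only to check that each $G[W_i]$ is acyclic. For $i\notin\{1,j\}$ nothing has changed, since $G[W_i]=(G-xx_1)[V_i]$ is a forest. As $x\notin W_1$, the graph $G[W_1]$ contains no edge incident to $x$, so $G[W_1]=(G-xx_1)[V'_1\cup\{v\}]$; this is the forest $(G-xx_1)[V_1]$ with the vertex $x$ deleted and then $v$ added together with at most one incident edge (as $e(v,V'_1)\leq 1$), hence a forest. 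Similarly $G[V_j\setminus\{v\}]$ is a subgraph of the forest $G[V_j]$ (which coincides with $(G-xx_1)[V_j]$ because $j\neq 1$), and $x$ has at most $|N(x)\cap V_j|\leq 1$ neighbors in $V_j\setminus\{v\}$, so $G[W_j]$ is a forest as well. Finally the edge $xx_1$ now joins $x\in W_j$ to $x_1\in W_1$ and so does not close a monochromatic cycle, while every other edge of $G$ was already well-placed. Thus $W_1,\ldots,W_m$ would be an equitable tree-$m$-coloring of $G$, contradicting the choice of $G$.

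The argument is essentially routine once this recoloring is written down; the one point that needs care is the bookkeeping guaranteeing $|N(x)\cap V_j|\leq 1$ for \emph{every} index $j$ appearing in the union of the relevant part, which is exactly the reason the locations of the neighbors of $x$ were normalized in advance, together with the trivial observation that $v$ is distinct from $x$ and from $x_1,x_2$. I do not expect any genuine obstacle.
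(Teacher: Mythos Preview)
Your proof is correct and follows exactly the same recoloring as the paper's proof, which simply says ``transferring $v$ from $V_i$ to $V'_1$ and adding $x$ to $V_i\setminus\{v\}$'' yields an equitable tree-$m$-coloring of $G$. You have merely spelled out the verifications (sizes, acyclicity of the modified classes, and the bookkeeping on $|N(x)\cap V_j|$) that the paper leaves implicit.
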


\begin{proof}
  We just prove (a), and another two results can be similarly verified.
Suppose, to the contrary, that there exists $v\in V_{i}$ for some $i\geq4$ such that $e(v,V'_{1})\leq1.$ By transferring $v$ from $V_{i}$ to $V'_{1}$ and adding $x$ to $V_{i}\setminus \{v\}$, we get an equitable tree-$m$-coloring of $G$, a contradiction.
\end{proof}

\begin{clm}\label{6}
(a) If $G\in \mathcal{G}_3$ and $m\geq5$, then for every $v\in V_2\cup V_3$, $e(v,V'_1)\geq 2$;\\
(b) If $G\in \mathcal{G}_4$ and $m\geq5$, then for every $v\in V_2$, $e(v,V'_1)\geq 2$.
\end{clm}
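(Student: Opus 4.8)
I will prove part (a) and indicate the change for (b); within (a) I would first treat $v\in V_2$ and then obtain the case $v\in V_3$ by the same argument, using the just-proved $V_2$ statement in place of the missing symmetry. So suppose, for contradiction, that some $v\in V_2$ has $e(v,V'_1)\le 1$; then $G[V'_1\cup\{v\}]$ is a forest. The plan is to manufacture an equitable tree-$m$-coloring of $G$ by relocating $x$ together with a short chain of other vertices, and, in the situation where no such relocation is available, to read off enough edge-density information from the resulting constraints to contradict Lemma~\ref{2}.

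I would isolate two elementary moves. \emph{Direct move:} if $|N(x)\cap(V_2\setminus\{v\})|\le 1$, then replacing $V'_1$ by $V'_1\cup\{v\}$ and $V_2$ by $(V_2\setminus\{v\})\cup\{x\}$ is an equitable tree-$m$-coloring of $G$ (all classes keep size $t$, both altered classes induce forests, and $xx_1$ is now bichromatic), a contradiction; hence $|N(x)\cap(V_2\setminus\{v\})|\ge 2$, which together with $d(x)\le 6$ and the two neighbours of $x$ in $V_1$ essentially pins down the distribution of $N(x)$ outside $V_1$. \emph{Shift move:} for an index $j$ and a vertex $u\in V_j$ with $e(u,V_2\setminus\{v\})\le 1$ and $|N(x)\cap(V_j\setminus\{u\})|\le 1$, the recoloring $V'_1\mapsto V'_1\cup\{v\}$, $V_2\mapsto(V_2\setminus\{v\})\cup\{u\}$, $V_j\mapsto(V_j\setminus\{u\})\cup\{x\}$ is again an equitable tree-$m$-coloring of $G$, a contradiction. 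Since $|N(x)\cap V_j|\le 1$ for every $j\ge 4$ (and also for $j=3$ when $|N(x)\cap V_3|\le 1$), the shift move applies to all vertices of those classes, so in that case every $u\in\cup_{i\ge 3}V_i$ satisfies $e(u,V_2\setminus\{v\})\ge 2$.

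Now set $A=V'_1\cup(V_2\setminus\{v\})$ and $B=\cup_{i\ge 3}V_i$. By Claim~\ref{5}(a) every $u\in B$ also has $e(u,V'_1)\ge 2$, hence $e(A,B)\ge 4|B|=4(m-2)t$, while $G[E(A,B)]$ is a bipartite IC-plane graph, so it belongs to $\mathcal{G}_4$ and Lemma~\ref{2} gives $e(A,B)\le\frac{9}{4}(mt-2)-4$. A short computation shows $4(m-2)t$ already exceeds $\frac{9}{4}(mt-2)-4$ for every $t\ge 1$ once $m\ge 5$, which is the desired contradiction. The same computation proves (b): there $d(x)\le 4$ forces $|N(x)\cap V_2|=2$ and $|N(x)\cap V_j|=0$ for all $j\ge 3$, so $B=\cup_{i\ge 3}V_i$ is available automatically and Claim~\ref{5}(b) supplies the second edge bound.

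The one configuration not yet settled in (a) is when $x$ has exactly two neighbours in $V_3$; this forces $d(x)=6$ with two neighbours in each of $V_1,V_2,V_3$ and $v\notin N(x)$. Here the shift move can place $x$ into $V_3$ only after deleting one of the two neighbours of $x$ in $V_3$, so directly it yields $e(u,V_2\setminus\{v\})\ge 2$ only for $u\in(N(x)\cap V_3)\cup\bigcup_{i\ge 4}V_i$, and the count above then merely closes for $m\ge 6$. I expect this to be the main obstacle. To push it down to $m\ge 5$ I would argue further: the sub-case $v\in N(x)$ cannot occur in this regime (the degree count fails); if the two neighbours of $x$ in $V_3$ lie in distinct components of the forest $G[V_3]$ then $x$ can be inserted into $V_3$ after deleting \emph{any} vertex, so the shift move again covers all of $V_3$ and we are back to the previous paragraph; and if they lie in one component, I would use a two-step shift that first moves $x$ into some $V_j$ with $j\ge 4$ (where it has no neighbour) and then vacates a slot in $V_3$, once more enlarging $B$ to $\cup_{i\ge 3}V_i$. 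Checking that each of these exchanges keeps every altered class both a forest and of the correct size is the delicate bookkeeping I anticipate having to carry out carefully.
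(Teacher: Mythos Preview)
Your overall architecture (chain of vertex moves, then an edge count against Lemma~\ref{2} on the bipartite IC-plane subgraph) is the same as the paper's, and your handling of (b) and of the sub-case $|N(x)\cap V_3|\le 1$ in (a) is essentially correct. One small slip: Claim~\ref{5}(a) only covers $\bigcup_{i\ge 4}V_i$, not $V_3$; you need to say that, because $|N(x)\cap V_3|\le 1$ in this sub-case, the \emph{proof} of Claim~\ref{5} applies verbatim to $V_3$ as well.

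The genuine gap is the case $|N(x)\cap V_3|=2$. Your ``two-step shift'' is only a sketch, and the component-of-$G[V_3]$ dichotomy does not by itself produce the needed inequality $e(u,V'_1)\ge 2$ for $u\in V_3$. The paper sidesteps this case entirely by \emph{never} trying to place $x$ into $V_2$ or $V_3$: every chain ends with $x$ going into some $V_i$ with $i\ge 4$, which is always safe. So your ``direct move'' is not used, and no information about $|N(x)\cap V_2|$ or $|N(x)\cap V_3|$ is ever required. The constraints on $V_3$ are obtained by a dichotomy on $V_3$ itself: if some $w'\in V_3$ has $e(w',V_2)\le 1$ (resp.\ $e(w',V'_1)\le 1$), then for any $v\in V_i$ with $i\ge 4$ and $e(v,V_3)\le 1$ the chain $v\to V_3$, $w'\to V_2$, $w\to V'_1$, $x\to V_i$ (resp.\ $v\to V_3$, $w'\to V'_1$, $x\to V_i$) is a valid equitable tree-$m$-coloring; hence $e(v,V_3)\ge 2$ for all $v\in\bigcup_{i\ge4}V_i$, and together with Claim~\ref{5}(a) and the analogous bound toward $V_2$ this gives $e\bigl(\bigcup_{i\ge4}V_i,\,V'_1\cup V_2\cup V_3\bigr)\ge 6(m-3)t$, already contradicting Lemma~\ref{2} for $m\ge 5$. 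If no such $w'$ exists, then every $w'\in V_3$ satisfies both $e(w',V'_1)\ge 2$ and $e(w',V_2)\ge 2$, which is exactly the missing ingredient for the final $4(m-2)t$ count. This dichotomy replaces your incomplete case analysis cleanly and uniformly.
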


\begin{proof}
 We just prove (a). Note that (b) is a corollary of (a) since $\mathcal{G}_4\subseteq \mathcal{G}_3$.

Suppose, to the contrary, that there exists $w\in V_{2}$ such that $e(w,V'_{1})\leq1$.
 In this case,
\begin{align}\label{7}
&e(v,V_{2})\geq2 ~\text{for each} ~v\in\bigcup_{i=4}^{m}V_{i}.
\end{align}
Otherwise, suppose that $e(v,V_{2})\leq1$ for some $v\in V_{i}$ with $4\leq i\leq m$. Transferring $v$ from $V_{i}$ to $V_{2}$, $w$ from $V_{2}$ to $V'_{1}$ and adding $x$ to $V_{i}\setminus \{v\}$, we get an equitable tree-$m$-coloring of $G$, a contradiction.

 If there exists $w'\in V_{3}$ such that $e(w',V_{2})\leq1$, then $e(v,V_{3})\geq2$ for each $v\in\bigcup_{i=4}^{m}V_{i}$. Otherwise, suppose that $e(v,V_{3})\leq1$ for some $v\in V_{i}$ with $4\leq i\leq m$. Transferring $v$ from $V_{i}$ to $V_{3}$, $w'$ from $V_{3}$ to $V_{2}$, $w$ from $V_{2}$ to $V'_{1}$ and adding $x$ to $V_{i}\setminus \{v\}$, we get an equitable tree-$m$-coloring of $G$, a contradiction.

 If there exists $w'\in V_{3}$ such that $e(w',V'_{1})\leq1$, then $e(v,V_{3})\geq2$ for each $v\in\bigcup_{i=4}^{m}V_{i}$. Otherwise, suppose that $e(v,V_{3})\leq1$ for some $v\in V_{i}$ with $4\leq i\leq m$. Transferring $v$ from $V_{i}$ to $V_{3}$, $w'$ from $V_{3}$ to $V'_{1}$ and adding $x$ to $V_{i}\setminus \{v\}$, we get an equitable tree-$m$-coloring of $G$, a contradiction.

 In each of the above two cases, by Claim \ref{5} and by \eqref{7}, we have $e(\bigcup_{i=4}^{m}V_{i},V'_{1}\bigcup V_{2}\bigcup V_{3})\geq6(m-3)t$. Since $G[E(\bigcup_{i=4}^{m}V_{i},V'_{1}\bigcup V_{2}\bigcup V_{3})]$ is a bipartite IC-plane graph of order $mt-1$, we have $e(\bigcup_{i=4}^{m}V_{i},V'_{1}\bigcup V_{2}\bigcup V_{3})\leq\frac{9}{4}(mt-1)-4=\frac{9}{4}mt-\frac{25}{4}$ by Lemma \ref{2}. Since $m\geq5$, $6(m-3)t>\frac{9}{4}mt-\frac{25}{4}$, a contradiction, too.
  Hence
 \begin{align}\label{8}
 &e(w',V_{2})\geq2~\text{and}~e(w',V'_{1})\geq2~\text{for each}~ w'\in V_{3}.
 \end{align}

     By Claim \ref{5}, \eqref{7}, and \eqref{8}, we conclude that $e(\bigcup_{i=3}^{m}V_{i},V'_{1}\bigcup V_{2})\geq4(m-2)t$.

     Since $G[E(\bigcup_{i=3}^{m}V_{i},V'_{1}\bigcup V_{2})]$ is a bipartite IC-plane graph of order $mt-1$,  $e(\bigcup_{i=3}^{m}V_{i},V'_{1}\bigcup V_{2})\leq\frac{9}{4}(mt-1)-4=\frac{9}{4}mt-\frac{25}{4}$ by Lemma \ref{2}. Since $m\geq5$, $4(m-2)t>\frac{9}{4}mt-\frac{25}{4}$, a contradiction. Hence, $e(w,V'_{1})\geq2$ for each $w\in V_{2}$. 
     By similar argument as above, we conclude that $e(w',V'_{1})\geq2$ for each $w'\in V_{3}$.
\end{proof}

Let $A=\cup_{i=2}^m V_i$. By Claims \ref{5} and \ref{6}, if $G\in \mathcal{G}_3$ and $m\geq 5$, or $G\in \mathcal{G}_5$, then
\begin{align}\label{10}
&e(v,V'_1)\geq 2~ \text{for every} ~v\in A,~\text{and thus}~e(A,V'_1)\geq 2(m-1)t.
 \end{align}
 Therefore, we divide $A$ into two parts, say $A_1$ and $A\setminus A_1$, where $A_1=\{v\in A~|~e(v,V'_1)=2\}$. Let $r=|A_1|$, then
 \begin{align}\label{11}
 &e(A,V'_{1})\geq2r+3\big((m-1)t-r\big)=3(m-1)t-r.
  \end{align}

Next, we calculate the lower bound for $r$.
Since $G[E(A,V'_{1})]$ is a bipartite IC-plane graph (so odd cycles are forbidden) and is also a subgraph of $G$, its girth $g_0$ is an even integer no less than $g$. Hence $g_0\geq 4$ if $g\leq 4$, and $g_0\geq 6$ if $g\geq 5$.


By \eqref{11} and Lemma \ref{2}, $$\frac{5g_0-2}{4g_0-8}(mt-1)-\frac{2g_0}{g_0-2}\geq e(A,V'_{1}) \geq 3(m-1)t-r,$$ which implies that
  \begin{align}\label{12}
  &r\geq\left\{
          \begin{array}{ll}
            \big(\frac{3}{4}m-3\big)t+\frac{25}{4}, & \hbox{if $g=3$ or $g=4$;} \\[.4em]
            \big(\frac{5}{4}m-3\big)t+\frac{19}{4}, & \hbox{if $g\geq 5$.}
          \end{array}
        \right.
\end{align}

\begin{lem}\label{13}
There exists a vertex $z\in V'_1$ that has two nonadjacent neighbors $y_1,y_2$ in $A_1$ if one of the following conditions is satisfied:

(i) $r> 2(t-1)$ and $g=3$;

(ii) $r> \frac{1}{2}(t-1)$ and $g\geq 4$;

(iii) $e(A,V'_1)\leq (3m-5)t+1$ and $g=3$;


(iv) $G\in \mathcal{G}_3$ and $m\geq 7$;

(v) $G\in \mathcal{G}_4$ and $m\geq 5$;

(vi) $G\in \mathcal{G}_5$ and $m\geq 3$.

\end{lem}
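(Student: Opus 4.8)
The plan is to argue by a counting/averaging argument on the bipartite graph $H := G[E(A,V'_1)]$, treating each condition as a way of forcing $A_1$ to be large relative to $|V'_1| = t-1$ (or equivalently forcing the average of $e(v,V'_1)$ over $A$ to be small), and then deducing that two vertices of $A_1$ share a common neighbor $z$ in $V'_1$ which is moreover non-adjacent — the latter coming for free from bipartiteness, since $z,y_1,y_2$ all having an edge between $y_1y_2$ would create an odd component, but actually we only need $y_1\not\sim y_2$, which holds automatically as $y_1,y_2\in A$ and $H$ (indeed $G[A]$ restricted to the relevant part) — wait, that is not automatic, so the real content is producing $z$ with two neighbors in $A_1$ that happen to be non-adjacent in $G$. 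First I would set up the double count: $\sum_{v\in A_1} e(v,V'_1) = 2r$, so the edges from $A_1$ land in $V'_1$, and if every vertex $z\in V'_1$ had at most one neighbor in $A_1$, then $2r \le |V'_1| = t-1$, i.e. $r \le \tfrac{1}{2}(t-1)$; combined with the lower bound \eqref{12} on $r$ this is already a contradiction under the numeric hypotheses (ii) and (i) (for $g=3$ one needs $r>2(t-1)$ because a vertex of $V'_1$ may be counted with multiplicity — here the subtlety is that $e(v,V'_1)=2$ allows $v$ to send both edges to the \emph{same} $z\in V'_1$ if multi-edges were allowed, but $G$ is simple, so two edges means two distinct neighbors, hence the bound $2r \le t-1$ is fine and (i)'s threshold $2(t-1)$ must be accounting for something else, likely the adjacency requirement).

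So the key step is the second half: having found some $z\in V'_1$ with at least two neighbors $y_1,y_2\in A_1$, I must ensure $y_1y_2\notin E(G)$. Here I would use a deletion/recoloring argument in the spirit of Claim~\ref{5} and Claim~\ref{6}: if $y_1\sim y_2$ for \emph{all} such pairs at \emph{every} $z$, then the neighborhood of each $z$ inside $A_1$ is a clique, and since $G\in\mathcal G_g$ has girth $\ge g\ge 3$ — in fact for $g\ge 4$ it is triangle-free, so a clique on $\ge 2$ vertices plus $z$ would already be a triangle $z y_1 y_2$ when that clique has size $\ge 2$: contradiction. That instantly handles $g\ge 4$ (conditions (ii),(v),(vi)) once $z$ with two $A_1$-neighbors exists. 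For $g=3$ the adjacency $y_1\sim y_2$ is genuinely possible, so I would instead count more carefully: bound the number of "bad" vertices $z$ (those whose $A_1$-neighborhood is a single clique) — each such $z$ together with its clique contributes a triangle or a dense substructure, and by Lemma~\ref{2} applied to an appropriate subgraph the number of edges is too small to let every $z$ be bad while $r$ is as large as \eqref{12} forces; then (iii) and (i) follow by pigeonhole, and (iv) follows by plugging $m\ge 7$ into \eqref{12} to make $r$ exceed the threshold $2(t-1)$ from (i).

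Concretely, for the $m$-dependent conditions (iv),(v),(vi) I would simply verify that the girth-dependent lower bound on $r$ in \eqref{12} exceeds the corresponding threshold in (i) or (ii): for $g\ge 5$, $(\tfrac54 m - 3)t + \tfrac{19}{4} > \tfrac12(t-1)$ holds as soon as $\tfrac54 m - 3 \ge \tfrac12$, i.e. $m\ge 3$, giving (vi); for $g=4$, $(\tfrac34 m-3)t+\tfrac{25}{4} > \tfrac12(t-1)$ needs $\tfrac34 m - 3 \ge \tfrac12$, i.e. $m\ge 5$, giving (v); and for $g=3$, $(\tfrac34 m - 3)t + \tfrac{25}{4} > 2(t-1)$ needs $\tfrac34 m - 3 \ge 2$, i.e. $m\ge \tfrac{20}{3}$, so $m\ge 7$, giving (iv). The main obstacle I anticipate is the $g=3$ case: unlike $g\ge 4$ one cannot conclude non-adjacency for free, so conditions (i),(iii),(iv) require the extra combinatorial step of showing that not every common neighbor $z$ can have its $A_1$-neighbors forming a clique — this is where a second application of Lemma~\ref{2}, to the subgraph on $V'_1 \cup A_1$ with the triangles at each bad $z$ counted, must be leveraged, and getting the constants to close is the delicate part.
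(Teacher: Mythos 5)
Your treatment of the girth-$\ge 4$ cases is essentially the paper's argument: if every $z\in V'_1$ had at most one neighbor in $A_1$ then $2r=e(A_1,V'_1)\le t-1$, contradicting $r>\tfrac12(t-1)$, and then triangle-freeness makes the two $A_1$-neighbors of some $z$ automatically nonadjacent; your numeric reductions of (v), (vi) (and of (iv) to (i)) via \eqref{12} also match the paper exactly. The problem is the $g=3$ case, which you explicitly leave open. You correctly sense that the threshold $2(t-1)$ in (i) ``must be accounting for something else,'' but the idea you are missing is simple and you never find it: if every $z\in V'_1$ satisfied $e(z,A_1)\le 4$, then $4(t-1)\ge e(A_1,V'_1)=2r>4(t-1)$, a contradiction; so some $z$ has at least \emph{five} neighbors in $A_1$, and if these were pairwise adjacent they would form, together with $z$, a copy of $K_6$ --- which is not IC-planar (it has $15$ edges, exceeding the bound $\tfrac{13}{4}\cdot 6-6=13.5$ of Lemma~\ref{2}). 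Hence two of them are nonadjacent. That is the entire content of (i); (iii) then follows by noting that $e(A,V'_1)\le(3m-5)t+1$ together with \eqref{11} forces $r\ge 2t-1>2(t-1)$.

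Your proposed substitute for this step --- classifying ``bad'' vertices $z$ whose $A_1$-neighborhood is a clique and applying Lemma~\ref{2} to some subgraph on $V'_1\cup A_1$ --- is not carried out, and you yourself concede that ``getting the constants to close is the delicate part.'' As written, conditions (i), (iii) and (iv) are therefore not proved. This is a genuine gap rather than a stylistic difference: the whole point of the factor $4$ in the hypothesis $r>2(t-1)$ is to guarantee a vertex of $V'_1$ with five $A_1$-neighbors so that the non-IC-planarity of $K_6$ can be invoked, and without that observation the $g=3$ half of the lemma does not close.
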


\begin{proof}
(i)
Suppose that for each vertex $z\in V'_{1}$, $e(z,A_{1})\leq4$. Since $|V'_{1_{}}|=t-1$ and $r>2(t-1)$, $4(t-1)\geq e(A_{1},V'_{1})=2r>4(t-1)$, a contradiction.  Thus there exists a vertex $z\in V'_{1}$, such that $e(z,A_{1})\geq5$. Since $K_{6}$ is not an IC-plane graph, there are two neighbors of $z$ in $A_{1}$ that are not adjacent, and thus the required structure occurs.

(ii)
Suppose that for each vertex $z\in V'_{1}$, $e(z,A_{1})\leq1$. Since $|V'_{1_{}}|=t-1$ and $r>\frac{1}{2}(t-1)$, $(t-1)\geq e(A_{1},V'_{1})=2r>(t-1)$, a contradiction.  Thus there exists a vertex $z\in V'_{1}$, such that $e(z,A_{1})\geq2$. Since $K_{3}$ is forbidden in an IC-plane graph with girth at least 4, there are two neighbors of $z$ in $A_{1}$ that are not adjacent, and thus the required structure occurs.

(iii)
In this case, by \eqref{11}, $(3m-5)t+1\geq e(A,V'_{1})\geq3(m-1)t-r$, which implies $r>2(t-1)$. Hence by (i), we complete the proof.

(iv)
If $m\geq7$, then by \eqref{12}, $r\geq(\frac{3}{4}\times 7-3)t+\frac{25}{4}>2(t-1)$ and (i) is satisfied.

(v)
If $m\geq5$, then by \eqref{12}, $r\geq(\frac{3}{4}\times 5-3)t+\frac{25}{4}>\frac{1}{2}(t-1)$ and (ii) is satisfied.

(vi)
If $m\geq3$, then by \eqref{12}, $r\geq(\frac{5}{4}\times 3-3)t+\frac{19}{4}>\frac{1}{2}(t-1)$ and (ii) is satisfied.
\end{proof}

Suppose that there exists a vertex $z\in V'_1$ that has two nonadjacent neighbors $y_1,y_2$ in $A_1$. It is easy to see that $V'_1\cup \{y_1,y_2\}\setminus \{z\}$ induces a forest $F_1$ of order $t$. Let $G'$ be the graph induced by $A\cup \{x,z\}\setminus \{y_1,y_2\}$. Note that $|G'|=|A|-2+2=(m-1)t$.

\begin{clm}\label{14}
$e(G')\leq e(G)-(m-1)t-2$.
\end{clm}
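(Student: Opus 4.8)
The plan is to compare $e(G)$ with $e(G')$ through the partition of $V(G)$ that $F_1$ and $G'$ induce. Since $x\in V_1$, $z\in V'_1$ and $y_1,y_2\in A$, the sets $V(F_1)=(V'_1\setminus\{z\})\cup\{y_1,y_2\}$ and $V(G')=(A\setminus\{y_1,y_2\})\cup\{x,z\}$ are disjoint and partition $V(G)$, with $|V(F_1)|=t$ and $|V(G')|=(m-1)t$. Hence $e(G)=e(F_1)+e(V(F_1),V(G'))+e(G')$, and Claim~\ref{14} is equivalent to
$$e(F_1)+e\big(V(F_1),V(G')\big)\geq (m-1)t+2 .$$
I would prove this by bounding the two terms on the left separately.

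For the first term, I would observe that each $y_i$ lies in $A_1$, so $e(y_i,V'_1)=2$; one of these two neighbours is $z$, and the other, call it $w_i$, lies in $V'_1\setminus\{z\}\subseteq V(F_1)$. Thus $y_1w_1$ and $y_2w_2$ are two distinct edges of the induced forest $F_1$, so $e(F_1)\geq2$.

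For the second term, I would keep only the edges joining $V'_1\setminus\{z\}$ to $A\setminus\{y_1,y_2\}$ together with the two edges $zy_1,zy_2$ (which cross, as $z\in V(G')$ and $y_1,y_2\in V(F_1)$), discarding all other crossing edges. Starting from \eqref{10}, i.e. $e(A,V'_1)\geq 2(m-1)t$, and subtracting the at most $e(z,A)$ edges incident with $z$ and the exactly $e(\{y_1,y_2\},V'_1)-e(\{y_1,y_2\},\{z\})=4-2=2$ edges from $\{y_1,y_2\}$ to $V'_1\setminus\{z\}$, I get
$$e\big(V'_1\setminus\{z\},\,A\setminus\{y_1,y_2\}\big)\geq 2(m-1)t-2-e(z,A)\geq (m-1)t-2 ,$$
the last inequality because $z\notin A$ gives $e(z,A)\leq|A|=(m-1)t$. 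Adding back the two crossing edges $zy_1,zy_2$ yields $e(V(F_1),V(G'))\geq (m-1)t$, and together with $e(F_1)\geq2$ this gives exactly $e(F_1)+e(V(F_1),V(G'))\geq(m-1)t+2$, which is what we wanted.

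The delicate point is the edge bookkeeping around $z$: the vertex $z$ may be adjacent to a large portion of $A$, but all such edges lie inside $G'$ and are of no use, so the estimate has essentially no slack. What rescues the count are the two edges $zy_1,zy_2$ forced by the hypothesis on $z$ and the two edges $y_1w_1,y_2w_2$ forced inside $F_1$; the one thing to watch is that no edge is counted twice — the bound $e(A,V'_1)\geq2(m-1)t$ must be trimmed both of its $z$-incident part and of its $\{y_1,y_2\}$-to-$(V'_1\setminus\{z\})$ part (which contains $y_1w_1$ and $y_2w_2$) before being used, and $zy_1,zy_2$ must be restored exactly once, as crossing edges.
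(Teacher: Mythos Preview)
Your proof is correct, but the bookkeeping differs from the paper's in two places. First, to reach $e(V'_1\setminus\{z\},A\setminus\{y_1,y_2\})\geq (m-1)t-2$, the paper argues \emph{per vertex}: since $e(v,V'_1)\geq 2$ for each $v\in A\setminus\{y_1,y_2\}$, removing $z$ drops at most one, so $e(v,V'_1\setminus\{z\})\geq 1$, and summing over the $(m-1)t-2$ vertices gives the bound directly. You instead start from the aggregate $e(A,V'_1)\geq 2(m-1)t$ and subtract the crude $e(z,A)\leq |A|=(m-1)t$; this happens to land on the same number but, as you yourself observe, with no slack. Second, for the four ``extra'' edges, the paper uses $xx_1,xx_2,zy_1,zy_2$, all lying in $E(V(G'),V(F_1))$ (so it only needs $e(F_1)\geq 0$); you use $zy_1,zy_2$ together with the two $F_1$-edges $y_1w_1,y_2w_2$, splitting the $+4$ as $+2$ in the cut and $+2$ inside $F_1$. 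Both routes are valid; the paper's is a little cleaner and avoids the trivial-but-loose bound on $e(z,A)$, while your version has the minor advantage that it never appeals to $xx_1,xx_2$ and so is indifferent to whether $z$ coincides with $x_1$ or $x_2$.
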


\begin{proof}

Since $e(v,V'_1)\geq 2$ for every $v\in A$, we have $e(v,V'_1\setminus \{z\})\geq 1$ for every $v\in A\setminus \{y_1,y_2\}$ and $e(A\setminus \{y_1,y_2\},V'_1\setminus \{z\})\geq|A\setminus \{y_1,y_2\}|=(m-1)t-2$.
Counting the four edges $xx_1,xx_2,zy_1,zy_2$, we immediately have $e(G',F_1)\geq (m-1)t-2+4=(m-1)t+2$. This implies that $e(G')\leq e(G)-(m-1)t-2$.
\end{proof}

\begin{clm}\label{15}
If $G'$ is equitably tree-$(m-1)$-colorable, then $G$ is equitably tree-$m$-colorable.
\end{clm}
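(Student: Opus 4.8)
The plan is to take an equitable tree-$(m-1)$-coloring of $G'$ and combine it with the forest $F_1$ on $V'_1\cup\{y_1,y_2\}\setminus\{z\}$ to produce an equitable tree-$m$-coloring of $G$. First I would recall the setup: $F_1$ has order exactly $t$ and is induced, $G'$ is induced by $A\cup\{x,z\}\setminus\{y_1,y_2\}$ and has order $|G'|=(m-1)t$, and $V(G)=V(F_1)\sqcup V(G')$ is a partition of the whole vertex set. So if $W_1,\dots,W_{m-1}$ are the color classes of an equitable tree-$(m-1)$-coloring of $G'$, each has size exactly $t$ since $(m-1)t/(m-1)=t$, and then $F_1,W_1,\dots,W_{m-1}$ are $m$ classes, each of size $t$, partitioning $V(G)$; the only thing to check is that each class induces a forest \emph{in $G$}.

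The key point is that a set inducing a forest in $G'$ need not induce a forest in $G$, because $G'$ is only the \emph{induced} subgraph on $A\cup\{x,z\}\setminus\{y_1,y_2\}$ and $G$ may have more edges; however, the edges of $G$ not in $G'$ are exactly the edges of $G$ incident to $\{y_1,y_2\}$, plus possibly edges from $x$ or $z$ to other vertices of $V(G)$. Wait — $x$ and $z$ lie in $G'$, so all edges of $G$ between vertices of $V(G')$ are already in $G'$; the edges of $G$ missing from $G'$ are precisely those with at least one endpoint in $\{y_1,y_2\}$. Since $y_1,y_2\in V(F_1)$, any such edge has an endpoint in $F_1$, so it cannot have both endpoints inside some $W_j$. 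Hence for each $j$, $G[W_j]=G'[W_j]$, which is a forest. So those $m-1$ classes are fine with no further work.

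The remaining obstacle, and the real content of the claim, is that $F_1$ induces a forest in $G$. We already argued, right before Claim \ref{14}, that $V'_1\cup\{y_1,y_2\}\setminus\{z\}$ induces a forest $F_1$ in $G$: indeed $G[V'_1]=G[V_1\setminus\{x\}]$ was a forest (a subforest of the color class $V_1$ in the coloring of $G-xx_1$), deleting $z$ keeps it a forest, and adding the two \emph{nonadjacent} vertices $y_1,y_2$ — each of which, since $y_1,y_2\in A_1$, sends exactly two edges into $V'_1$, one of which goes to $z$ and is thus lost, leaving at most one edge from each of $y_1,y_2$ into $V'_1\setminus\{z\}$ — cannot create a cycle: adding a vertex joined by at most one edge to a forest yields a forest, and $y_1,y_2$ are not adjacent to each other. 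So $F_1$ is an induced forest of $G$. Therefore $F_1,W_1,\dots,W_{m-1}$ is an equitable tree-$m$-coloring of $G$, completing the proof; I expect the only subtlety a careful reader should verify is this edge-accounting for $F_1$, everything else being bookkeeping on sizes.
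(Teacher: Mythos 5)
Your proof is correct and follows essentially the same route as the paper, which simply asserts that $F_1$ together with the $m-1$ classes of an equitable tree-$(m-1)$-coloring of $G'$ yields an equitable tree-$m$-coloring of $G$; you merely spell out the two verifications (that $G[W_j]=G'[W_j]$ because $G'$ is an induced subgraph of $G$, and that $F_1$ is an induced forest of order $t$) that the paper leaves implicit. One small slip: the edges of $G$ absent from $G'$ are exactly those with an endpoint in $V(F_1)=V(G)\setminus V(G')$, not only those incident to $\{y_1,y_2\}$, but this does not affect your argument since any such edge still fails to have both endpoints in a single $W_j$.
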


\begin{proof}
 Since $|G'|=(m-1)t$, $G'$ has an equitable partition into $m-1$ induced forests $F_2,\ldots,F_m$ with $|F_{i}|=t$ for each $2\leq i\leq m$.
 It follows that $G$ has an equitable partition into $m$ induced forests $F_1,F_2,\ldots,F_m$, a contradiction to the choice of $G$. Recall that $F_{1}$ is the graph induced by $V'_{1}\bigcup \{y_{1},y_{2}\}\backslash \{z\}$, which is a forest of order $t$.
\end{proof}

\section{The proof of Theorem \ref{1}}

In the proofs of the following theorems, we use the edge-minimal-counterexample-arguments as mentioned in Section 3, and thus the notations and results in Section 3 can be applied here.

\begin{thm}\label{girth-3}
 Let $s\in \{5,6,7,8\}$. If $G$ is a graph in $\mathcal{G}_3$ of order $mt$ and size at most $$\bigg(\frac{4s-19}{4}m+\frac{-2s^2+17s-8}{4}\bigg)t-(22-2s),$$ then $G$ has an equitable partition into $m$ induced forests for any $m\geq s$.
\end{thm}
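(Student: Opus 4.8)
The plan is to argue by contradiction, following the edge-minimal-counterexample framework set up in Section~3. Suppose $G$ is an edge-minimal graph in $\mathcal{G}_3$ of order $mt$ with $e(G)\le \big(\frac{4s-19}{4}m+\frac{-2s^2+17s-8}{4}\big)t-(22-2s)$ that is \emph{not} equitably tree-$m$-colorable, for some $m\ge s$. Then all the notation and structural results of Section~3 apply: there is a low-degree edge $xx_1$, a coloring of $G-xx_1$ into classes $V_1,\dots,V_m$ of size $t$, a class $V_1$ containing two neighbors of $x$, and the bound \eqref{11} on $e(A,V'_1)$ with $A=\cup_{i=2}^m V_i$. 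The first step is to combine the hypothesized upper bound on $e(G)$ with \eqref{11} to force $r=|A_1|$ to be large; specifically I expect to derive $r>2(t-1)$, which is exactly hypothesis~(i) of Lemma~\ref{13}. Indeed, $e(A,V'_1)\le e(G)$, so $3(m-1)t-r\le e(A,V'_1)\le e(G)$, and the precise shape of the size bound in the statement is evidently reverse-engineered so that this yields $r>2(t-1)$ for $m\ge s$.

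Once $r>2(t-1)$, Lemma~\ref{13}(i) gives a vertex $z\in V'_1$ with two nonadjacent neighbors $y_1,y_2\in A_1$, and then Claims~\ref{14} and~\ref{15} take over: $F_1$ induced by $V'_1\cup\{y_1,y_2\}\setminus\{z\}$ is a forest of order $t$, the graph $G'$ induced by $A\cup\{x,z\}\setminus\{y_1,y_2\}$ has order $(m-1)t$, and by Claim~\ref{14} we have $e(G')\le e(G)-(m-1)t-2$. The remaining step is to verify that $G'$ still satisfies the hypothesis of the theorem \emph{with $m$ replaced by $m-1$} — that is, $e(G')\le \big(\frac{4s-19}{4}(m-1)+\frac{-2s^2+17s-8}{4}\big)t-(22-2s)$ — so that, since $G'\in\mathcal{G}_3$ has order $(m-1)t$ and $m-1\ge s-1$, we may invoke the statement inductively (on $m$, or equivalently on $e(G)$ via edge-minimality) to conclude $G'$ is equitably tree-$(m-1)$-colorable. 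This requires checking $e(G)-(m-1)t-2 \le \big(\frac{4s-19}{4}(m-1)+\cdots\big)t-(22-2s)$, i.e.\ that dropping $m$ by one on the right loses at most $(m-1)t+2$; since the coefficient of $m$ in the bracket is $\frac{4s-19}{4}$, this amounts to $\frac{4s-19}{4}t + (m-1)t + 2 \ge 0$... which is false for large $t$, so in fact the induction must be run the other way, on $m$ increasing or on order: one should instead treat $m$ as fixed and induct on $e(G)$, noting $G'$ has strictly fewer edges and the \emph{same} threshold statement is what we want to contradict — so the clean formulation is to take $G$ edge-minimal over all counterexamples with \emph{all} admissible $m\ge s$ simultaneously, and observe $G'$ would be a smaller counterexample for parameter $m-1$.

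The delicate point, and the one I would be most careful about, is exactly this bookkeeping of the induction parameter: we need the class-$(m-1)$ version of the size bound to be implied by $e(G')\le e(G)-(m-1)t-2$ together with the class-$m$ bound on $e(G)$, and the constant $22-2s$ together with the quadratic coefficient $\frac{-2s^2+17s-8}{4}$ are precisely tuned so that this propagation closes — I would write out the inequality $\big(\frac{4s-19}{4}m+\frac{-2s^2+17s-8}{4}\big)t-(22-2s) - (m-1)t - 2 \le \big(\frac{4s-19}{4}(m-1)+\frac{-2s^2+17s-8}{4}\big)t-(22-2s)$, which simplifies to $\frac{4s-19}{4}t - (m-1)t - 2 \le -2$, i.e.\ $(m-1)\ge \frac{4s-19}{4}$, true since $m\ge s$ and $s\ge 5$ forces $m-1\ge s-1 \ge \frac{4s-19}{4}$ for $s\le 8$. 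Thus the induction does close for each fixed $s\in\{5,6,7,8\}$. The base case is $m=s$ (or $n=mt$ small), handled as in Lemma~\ref{4}: when the order is at most $m$ the coloring is trivial, and the reductions above strictly decrease the size, so edge-minimality gives a contradiction. The main obstacle is therefore not conceptual but arithmetic — ensuring the five-fold family of linear size bounds indexed by $s$ is exactly self-propagating under the $G\to G'$ reduction — and I would double-check the coefficient $\frac{4s-19}{4}$ is nonnegative only for $s\ge 5$, which is why the statement restricts to $s\in\{5,6,7,8\}$; for $s=5$ the coefficient is $\frac14>0$ and everything is consistent with the $m\ge 8$ (i.e.\ using the bound at $s=8$) and $m\ge 5,6,7$ thresholds that feed into the final proof of Theorem~\ref{1}.
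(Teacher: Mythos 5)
Your reduction step is the right one (find $z,y_1,y_2$ via Lemma~\ref{13}, pass to $G'$ via Claims~\ref{14} and~\ref{15}), but your induction scheme does not close, and this is exactly where the unusual shape of the size bound matters. You propagate the \emph{same-$s$} bound from $m$ to $m-1$ and note this needs $m-1\ge\frac{4s-19}{4}$; fine, but to apply the theorem to $G'$ you also need $m-1\ge s$, which fails precisely at the boundary case $m=s$. Your fallback (``take $G$ edge-minimal over all admissible $m\ge s$ simultaneously'') does not help, because for $m=s$ the reduced graph $G'$ lives at parameter $m-1=s-1<s$, outside the scope of the statement you are inducting on; and your proposed base case (``order at most $m$ is trivial'') is irrelevant since $|G|=mt$ is generally much larger than $m$. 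The paper instead inducts on $s$: the coefficients $\frac{-2s^2+17s-8}{4}$ and $22-2s$ are tuned so that $e(G)-(m-1)t-2$ is bounded by exactly the \emph{$(s-1)$-bound at $m-1$ classes}, and since $m-1\ge s-1$ the induction hypothesis for $s-1$ applies — including when $m=s$. The chain terminates at $s=5$, where no reduction is performed at all: there the size bound $(\frac14 m+\frac{27}{4})t-12$ is already smaller than the forced count $e(A,V'_1)\ge 2(m-1)t$ from \eqref{10}, an immediate contradiction. Your write-up supplies neither this $s=5$ base case nor the switch to the $(s-1)$-statement at $m=s$.

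A secondary but genuine error: your claim that the size hypothesis ``is reverse-engineered so that $3(m-1)t-r\le e(G)$ yields $r>2(t-1)$ for $m\ge s$'' is false for $s=7,8$. For $s=8$ the bound is $\frac{13}{4}mt-6$, and $3(m-1)t-\frac{13}{4}mt+6=(-\frac14 m-3)t+6$ is negative for $m\ge 8$, so this gives nothing. The configuration for $s\in\{7,8\}$ comes from Lemma~\ref{13}(iv), whose proof uses \eqref{12} — a bound on $r$ derived from the fact that $G[E(A,V'_1)]$ is a \emph{bipartite} IC-plane graph (girth $\ge 4$), not from the hypothesized size bound on $G$. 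Only for $s=6$ does the size hypothesis itself feed into Lemma~\ref{13} (via part (iii), since then $e(A,V'_1)\le(3m-5)t+1$). So the existence of the reducible configuration needs the case split the paper makes, not the uniform derivation you sketch.
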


\begin{proof}

 We prove it by induction on $s$. First of all, if $s=5$, then $e(G)\leq(\frac{1}{4}m+\frac{27}{4})t-12$, which implies by \eqref{10} that $e(A,V'_{1})-e(G)\geq2(m-1)t-\big((\frac{1}{4}m+\frac{27}{4})t-12\big)>0$ for $m\geq5$, a contradiction.

 We assume that the result holds for $s=k-1$, where $6\leq k\leq8$. Now we consider the case when $s=k$.

 If $s\in \{7,8\}$, then by Lemma \ref{13}(iv), there exists a vertex $z\in V'_{1}$ that has two nonadjacent neighbors $y_{1}$, $y_{2}$ in $A_{1}$. If $s=6$, then
 $e(A,V'_{1})\leq e(G)\leq (\frac{5}{4}m+\frac{11}{2})t-10\leq(3m-5)t+1$,
 and thus by Lemma $\ref{13}$(iii) the same result holds. Therefore, by Claim \ref{14}, we have
  \begin{align*}
  e(G')&\leq e(G)-(m-1)t-2\\
  &\leq\bigg(\frac{4k-19}{4}m+\frac{-2k^2+17k-8}{4}\bigg)t-(22-2k)-(m-1)t-2\\
  &=\bigg(\frac{4(k-1)-19}{4}(m-1)+\frac{-2(k-1)^2+17(k-1)-8}{4}\bigg)t-\big(22-2(k-1)\big).
  \end{align*}

 Since $G'$ is an IC-plane graph and $m-1\geq s-1=k-1$, $G'$ admits an equitable tree-$(m-1)$-coloring by the induction hypothesis.
 Hence by Claim \ref{15}, $G$ admits an equitable tree-$m$-coloring.
\end{proof}

\begin{thm}\label{girth-4}
 Let $s\in \{4,5,6\}$. If $G$ is a graph in $\mathcal{G}_4$ of order $mt$ and size at most $$\bigg(\frac{4s-15}{4}m+\frac{-2s^2+13s-6}{4}\bigg)t-(16-2s),$$ then $G$ has an equitable partition into $m$ induced forests for any $m\geq s$.
\end{thm}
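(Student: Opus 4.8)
The plan is to mimic the structure of the proof of Theorem~\ref{girth-3} almost verbatim, carrying out an induction on $s$ with base case $s=4$. First I would set up the edge-minimal counterexample $G\in\mathcal{G}_4$ of order $mt$ with the claimed edge bound, so that all notation and intermediate facts from Section~3 (the classes $V_1,\dots,V_m$, the vertex $x$ of small degree, the set $A=\cup_{i=2}^m V_i$, the set $A_1$, and the quantity $r=|A_1|$) are available. For the base case $s=4$, I would plug $s=4$ into the size bound, getting something like $e(G)\le(\tfrac14 m+\tfrac{9}{2})t-8$, and then derive a contradiction with inequality~\eqref{10}, namely $e(A,V'_1)\ge 2(m-1)t$: since $e(A,V'_1)\le e(G)$, one checks $2(m-1)t>(\tfrac14 m+\tfrac92)t-8$ for all $m\ge 4$, which is the girth-$4$ analogue of the $s=5$ base case in Theorem~\ref{girth-3}. (I should double-check that Claims~\ref{5} and \ref{6}, hence \eqref{10}, do apply when $g=4$ and $m\ge4$; Claim~\ref{5}(b) holds unconditionally and Claim~\ref{6}(b) needs $m\ge5$, so for $m=4$ I may need to argue \eqref{10} directly from Claim~\ref{5}(b) together with the case $g=4$ handling of $V_2,V_3$ — this is a point to be careful about.)

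For the inductive step, assume the statement for $s=k-1$ with $5\le k\le 6$, and consider $s=k$, $m\ge k$. The key is to produce the vertex $z\in V'_1$ with two nonadjacent neighbors $y_1,y_2$ in $A_1$. For $s\in\{5,6\}$ with $m\ge5$ this follows from Lemma~\ref{13}(v) directly; for the boundary case $s=5,m=5$ one checks $m\ge5$ still holds, and for $s=6$ any $m\ge6\ge5$ works, so Lemma~\ref{13}(v) covers everything — no separate "condition (iii)-style" argument should be needed here, which makes the girth-$4$ case slightly cleaner than girth-$3$. Then, exactly as in Theorem~\ref{girth-3}, apply Claim~\ref{14} to get $e(G')\le e(G)-(m-1)t-2$, substitute the assumed bound for $e(G)$, and verify by the routine algebraic identity that the right-hand side equals $\big(\tfrac{4(k-1)-15}{4}(m-1)+\tfrac{-2(k-1)^2+13(k-1)-6}{4}\big)t-(16-2(k-1))$. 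Since $G'\in\mathcal{G}_4$ (it is an induced subgraph, so girth does not decrease) and $m-1\ge k-1$, the induction hypothesis gives an equitable tree-$(m-1)$-coloring of $G'$, and Claim~\ref{15} lifts this to an equitable tree-$m$-coloring of $G$, contradicting the choice of $G$.

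The main obstacle I anticipate is purely bookkeeping: confirming that the polynomial coefficients telescope correctly under the substitution $s\mapsto s-1$, $m\mapsto m-1$ — that is, that $\big(\tfrac{4s-15}{4}m+\tfrac{-2s^2+13s-6}{4}\big)t-(16-2s)-(m-1)t-2$ simplifies exactly to the $(s-1,m-1)$ version of the bound. This is the analogue of the identity used silently in Theorem~\ref{girth-3}, and one should verify both the $mt$-coefficient and the constant term match; the $-(m-1)t-2$ correction is precisely what shifts $s$ down by one. A secondary subtlety is making sure the ranges of $m$ in the hypotheses of Lemma~\ref{13} and Claim~\ref{6} are respected throughout, especially at the extreme values $s=k=m$; if Claim~\ref{6}(b)'s requirement $m\ge5$ is genuinely needed for \eqref{10} and fails at $s=m=4$, I would instead note that for $g=4$ the vertex $x$ has $d(x)\le4$ with two neighbors in $V_1$, so $|N(x)\cap V_3|\le1$ and the counting in Claim~\ref{5}(b) combined with a direct check on $V_2$ still yields enough to run the base case. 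Modulo these checks, the proof is a faithful girth-$4$ transcription of the girth-$3$ argument.
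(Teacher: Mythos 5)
Your proposal is essentially the paper's own proof: induction on $s$ with base case $s=4$ killed by comparing $e(A,V'_1)\geq 2(m-1)t$ with the size bound (the correct constant is $\frac{7}{2}$, not $\frac{9}{2}$, but the contradiction goes through either way), then Lemma \ref{13}(v), Claim \ref{14}, the telescoping identity under $s\mapsto s-1$, $m\mapsto m-1$, and Claim \ref{15} for the inductive step — all exactly as in the paper, which indeed needs no analogue of the condition-(iii) detour used for girth 3. The one caveat you raise, that \eqref{10} is only stated under ``$G\in\mathcal{G}_3$ and $m\geq 5$'' so its use at $s=m=4$ (where Claim \ref{6}(b) does not apply to $V_2$) needs justification, is a real imprecision, but it is present verbatim in the paper's own base case, which simply invokes \eqref{10} for $m\geq 4$; your proposal is thus no less rigorous than the published argument.
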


\begin{proof}
We prove it by induction on $s$. First of all, if $s=4$, then $e(G)\leq(\frac{1}{4}m+\frac{7}{2})t-8$, which implies by \eqref{10} that $e(A,V'_{1})-e(G)\geq2(m-1)t-\big((\frac{1}{4}m+\frac{7}{2})t-8\big)>0$ for $m\geq4$, a contradiction.

 We assume that the result holds for $s=k-1$, where $5\leq k\leq6$. Now we consider the case when $s=k$.

 If $s\in \{5,6\}$, then by Lemma $\ref{13}$(v) there exists a vertex $z\in V'_{1}$ that has two nonadjacent neighbors $y_{1}$, $y_{2}$ in $A_{1}$. Therefore, by Claim \ref{14}, we have
   \begin{align*}
  e(G')&\leq e(G)-(m-1)t-2\\
  &\leq\bigg(\frac{4k-15}{4}m+\frac{-2k^2+13k-6}{4}\bigg)t-(16-2k)-(m-1)t-2\\
  &=\bigg(\frac{4(k-1)-15}{4}(m-1)+\frac{-2(k-1)^2+13(k-1)-6}{4}\bigg)t-\big(16-2(k-1)\big).
  \end{align*}

Since $G'$ is an IC-plane graph and $m-1\geq s-1=k-1$, $G'$ admits an equitable tree-$(m-1)$-coloring by the induction hypothesis.
 Hence by Claim \ref{15}, $G$ admits an equitable tree-$m$-coloring.
\end{proof}

Choosing $s$ to be 8 and 6 in Theorem \ref{girth-3} and in Theorem \ref{girth-4}, respectively, we conclude by Lemmas \ref{2} and \ref{4} that

\begin{thm}\label{girth-3-end}
Every plane graph with independent crossings has an equitable partition into $m$ induced forests for each $m\geq 8$. \hfill$\square$
\end{thm}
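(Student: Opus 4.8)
The plan is to derive Theorem~\ref{girth-3-end} as an immediate corollary of the machinery already assembled, by combining Theorem~\ref{girth-3} (with the parameter specialized to $s=8$), Lemma~\ref{2}, and Lemma~\ref{4}. First I would invoke Lemma~\ref{4}: since $\mathfrak{G}(3)=5$ and $8\geq 5$, it suffices to show that every graph $G\in\mathcal{G}_3$ of order $mt$ is equitably tree-$m$-colorable for every integer $t\geq 1$ and every $m\geq 8$. So fix such an $m$ and $t$, and let $G\in\mathcal{G}_3$ with $|G|=mt$.

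Next I would substitute $s=8$ into the size bound of Theorem~\ref{girth-3}. The coefficient of $m$ becomes $\frac{4\cdot 8-19}{4}=\frac{13}{4}$, the additive coefficient of $t$ becomes $\frac{-2\cdot 64+17\cdot 8-8}{4}=\frac{-128+136-8}{4}=0$, and the constant becomes $-(22-16)=-6$. Thus Theorem~\ref{girth-3} with $s=8$ asserts: every $G\in\mathcal{G}_3$ of order $mt$ with $e(G)\leq \frac{13}{4}mt-6$ has an equitable partition into $m$ induced forests, for every $m\geq 8$. It therefore remains only to check that the edge bound from Lemma~\ref{2} (with $g=3$) is always at most $\frac{13}{4}mt-6$. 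Lemma~\ref{2} with $g=3$ gives $e(G)\leq \frac{13}{4}|G|-6=\frac{13}{4}mt-6$, which is exactly the required inequality with equality. Hence \emph{every} graph in $\mathcal{G}_3$ of order $mt$ satisfies the hypothesis of Theorem~\ref{girth-3} with $s=8$, so it is equitably tree-$m$-colorable.

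Finally I would assemble the pieces: every graph in $\mathcal{G}_3$ of order $mt$ is equitably tree-$m$-colorable for all $t\geq 1$ and all $m\geq 8$, so by Lemma~\ref{4} every graph in $\mathcal{G}_3$ (of arbitrary order) is equitably tree-$m$-colorable for all $m\geq 8$. Since $\mathcal{G}_3$ is precisely the class of plane graphs with independent crossings, this is the statement of Theorem~\ref{girth-3-end}.

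There is no real obstacle here: the proof is a bookkeeping verification that the parameters line up. The only point that needs a moment's care is confirming that the numbers coming out of Lemma~\ref{2} for $g=3$ coincide with the $s=8$ instantiation of the bound in Theorem~\ref{girth-3} — which they do, on the nose — and that the side condition $m\geq \mathfrak{G}(3)=5$ needed to apply Lemma~\ref{4} is implied by $m\geq 8$. All the genuine work (the induction on $s$, the structural analysis of the edge-minimal counterexample, the discharging-style edge counts in Claims~\ref{5}, \ref{6}, \ref{14}, \ref{15} and Lemma~\ref{13}) has already been done upstream, so this final theorem is purely a matter of choosing the optimal value of the free parameter $s$ and reading off the conclusion.
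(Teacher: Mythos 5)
Your proposal is correct and follows exactly the paper's own route: the paper derives Theorem~\ref{girth-3-end} by taking $s=8$ in Theorem~\ref{girth-3}, noting via Lemma~\ref{2} that every graph in $\mathcal{G}_3$ satisfies $e(G)\leq\frac{13}{4}|G|-6$ (matching the $s=8$ bound exactly), and reducing to orders divisible by $m$ via Lemma~\ref{4}. Your arithmetic checks ($\frac{13}{4}$, $0$, $-6$) and the verification $m\geq 8\geq\mathfrak{G}(3)=5$ are all accurate.
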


\begin{thm}\label{girth-4-end}
Every plane graph with independent crossings and with girth at least 4 has an equitable partition into $m$ induced forests for each $m\geq 6$. \hfill$\square$
\end{thm}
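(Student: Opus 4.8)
\textbf{Proof proposal for Theorem \ref{girth-4-end}.}

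The plan is to derive Theorem \ref{girth-4-end} as an immediate consequence of the machinery already in place, exactly as the sentence preceding the statement indicates. First I would invoke Lemma \ref{4}: since we are working in the class $\mathcal{G}_4$ and the target is an equitable tree-$m$-coloring for every $m\geq 6\geq \mathfrak{G}(4)=3$, it suffices to prove that every graph in $\mathcal{G}_4$ of order $mt$ is equitably tree-$m$-colorable for each integer $t\geq 1$. So the whole problem reduces to the ``order divisible by $m$'' case, and from here I would argue by contradiction, taking $G$ to be an edge-minimal counterexample of order $mt$ in $\mathcal{G}_4$, which brings us into the setting of Section 3 with $g=4$ and $m\geq 6$.

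The key step is to verify the size hypothesis of Theorem \ref{girth-4} with $s=6$, so that Theorem \ref{girth-4} applies to $G$ and yields the desired coloring, contradicting the choice of $G$. For $s=6$, the size bound in Theorem \ref{girth-4} reads $e(G)\le\big(\frac{9}{4}m+\frac{-72+78-6}{4}\big)t-(16-12)=\big(\frac{9}{4}m+0\big)t-4=\frac{9}{4}mt-4$; wait, I should recompute: $\frac{-2s^2+13s-6}{4}$ at $s=6$ is $\frac{-72+78-6}{4}=0$, and $4s-15=9$, so the bound is $\frac94 mt - 4$. By Lemma \ref{2} with $g=4$, every graph in $\mathcal{G}_4$ of order $n$ satisfies $e(G)\le \frac{18}{8}n-\frac{8}{2}=\frac94 n-4$; with $n=mt$ this is exactly $\frac94 mt-4$. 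Hence the size hypothesis of Theorem \ref{girth-4} is automatically met by every member of $\mathcal{G}_4$ of order $mt$, and since $m\geq 6=s$, Theorem \ref{girth-4} guarantees that $G$ has an equitable partition into $m$ induced forests — contradicting that $G$ was a counterexample. Therefore no counterexample exists, and Lemma \ref{4} upgrades this to all of $\mathcal{G}_4$.

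The only place requiring genuine care — and hence the main (though modest) obstacle — is checking that the numerical bound from Lemma \ref{2} at $g=4$ coincides with, or is at most, the threshold appearing in Theorem \ref{girth-4} at $s=6$; one must confirm $\frac{5g-2}{4g-8}=\frac{18}{8}=\frac94$ and $\frac{2g}{g-2}=4$, so the two expressions agree on the nose rather than leaving slack. After that, the argument is purely a matter of assembling Lemmas \ref{2} and \ref{4} with Theorem \ref{girth-4}, with no further combinatorial work needed. I would phrase the final proof in one short paragraph: apply Lemma \ref{2} to see that every order-$mt$ graph in $\mathcal{G}_4$ meets the size condition of Theorem \ref{girth-4} with $s=6$, deduce equitable tree-$m$-colorability for $m\geq 6$ when $m\mid |G|$, and then invoke Lemma \ref{4} to remove the divisibility restriction.
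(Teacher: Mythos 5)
Your proposal is correct and matches the paper's own (very brief) derivation: the authors obtain Theorem \ref{girth-4-end} precisely by noting that Lemma \ref{2} with $g=4$ gives $e(G)\leq \frac{9}{4}mt-4$, which is exactly the size threshold of Theorem \ref{girth-4} at $s=6$, and then invoking Lemma \ref{4} to drop the divisibility assumption. Your arithmetic checks out, and the detour through an edge-minimal counterexample is harmless but unnecessary, since Theorem \ref{girth-4} applies directly to every order-$mt$ graph in $\mathcal{G}_4$.
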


Now, we consider plane graph with independent crossings and with higher girth.

\begin{thm}\label{girth-5-end}
Every plane graph with independent crossings and with girth at least 5 has an equitable partition into $m$ induced forests for each $m\geq 5$.
\end{thm}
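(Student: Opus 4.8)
The plan is to mimic the structure of Theorems \ref{girth-3} and \ref{girth-4}: prove a more general statement for graphs of bounded size by induction on the target number of colors, and then deduce the girth-$5$ result by combining the size bound of Lemma \ref{2} with the reduction Lemma \ref{4}. Concretely, I would first state and prove an auxiliary theorem of the form: for $s\in\{3,4,5\}$, if $G\in\mathcal{G}_5$ has order $mt$ and size at most $\big(\alpha(s)m+\beta(s)\big)t-\gamma(s)$ for suitable affine-in-$s$ coefficients, then $G$ is equitably tree-$m$-colorable for every $m\geq s$. The coefficients must be chosen so that (a) the base case $s=3$ is vacuous because the size bound contradicts \eqref{10} (which is available here since $G\in\mathcal{G}_5$), and (b) the inductive telescoping works: passing from $G$ to $G'$ via Claim \ref{14} drops the size by exactly $(m-1)t+2$ and the order by $t$ (one fewer color class), so the size bound for parameter $(s,m)$ must turn into the size bound for parameter $(s-1,m-1)$ after subtracting $(m-1)t+2$. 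This is the same bookkeeping as in Theorem \ref{girth-3}, just with the girth-$\geq5$ edge count $\frac{13}{6}\cdot\frac{|G|}{}$-type constants (from Lemma \ref{2} with $g=5$, giving $e(G)\le\frac{23}{12}|G|-\frac{10}{3}$) in place of the $\frac94$ bipartite bound.

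The inductive step requires, for each $s\in\{4,5\}$, producing the vertex $z\in V'_1$ with two nonadjacent neighbors $y_1,y_2\in A_1$; this is exactly Lemma \ref{13}(vi), which applies whenever $G\in\mathcal{G}_5$ and $m\geq3$, hence in particular for all $m\geq s\geq4$. Once $z,y_1,y_2$ are in hand, Claim \ref{14} gives $e(G')\le e(G)-(m-1)t-2$, I substitute the size hypothesis for $(s,m)$, simplify, and check the resulting expression is at most the size bound for $(s-1,m-1)$ — a routine algebraic identity in $m,t,s$. Then $G'\in\mathcal{G}_5$ has order $(m-1)t$ and $m-1\geq s-1$, so the induction hypothesis makes $G'$ equitably tree-$(m-1)$-colorable, and Claim \ref{15} lifts this to an equitable tree-$m$-coloring of $G$, contradicting the choice of $G$ as a counterexample.

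Finally I would choose $s=5$ in the auxiliary theorem: by Lemma \ref{2} with $g=5$, every $G\in\mathcal{G}_5$ of order $mt$ satisfies $e(G)\le\frac{23}{12}mt-\frac{10}{3}$, and one checks this is below the threshold $\big(\alpha(5)m+\beta(5)\big)t-\gamma(5)$ for all $m\geq5$ (since $\alpha(5)>\frac{23}{12}$ by the way the coefficients were calibrated, the linear-in-$m$ term dominates). Hence every graph in $\mathcal{G}_5$ of order divisible by $m$ is equitably tree-$m$-colorable for $m\geq5$, and Lemma \ref{4} (case $g\geq4$, so $\mathfrak{G}(g)=3\leq5$) removes the divisibility restriction, completing the proof.

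The main obstacle I anticipate is purely one of calibration rather than of idea: one must pin down the three coefficient functions $\alpha(s),\beta(s),\gamma(s)$ so that simultaneously the $s=3$ base case is a genuine contradiction with \eqref{10}, the telescoping identity $e(G)\text{-bound}(s,m)-(m-1)t-2=e(G)\text{-bound}(s-1,m-1)$ holds as a polynomial identity, and the $s=5$ instance dominates the Lemma \ref{2} bound for every $m\geq5$. Because Lemma \ref{13}(vi) already supplies the structural reduction with no extra hypothesis on $m$ beyond $m\geq3$, there is no delicate counting argument of the kind appearing in Claim \ref{6}; the whole difficulty is arranging the arithmetic of the size thresholds, exactly as in the girth-$3$ and girth-$4$ cases, with the constants shifted to reflect $g=5$.
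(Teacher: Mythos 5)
Your proposal is correct and is essentially the paper's argument: the same ingredients (Lemma \ref{2} with $g=5$, Lemma \ref{13}(vi), Claims \ref{14} and \ref{15}, the lower bound \eqref{10}, and Lemma \ref{4} to remove the divisibility condition) assembled in the same way, and the calibration you defer does work out (anchoring $\alpha(5)=\tfrac{23}{12}$, $\beta(5)=0$, $\gamma(5)=\tfrac{10}{3}$ forces $\alpha(s)=s-\tfrac{37}{12}$, $\gamma(s)=\tfrac{40}{3}-2s$, etc., and the base case does contradict \eqref{10}). The only difference is presentational: the paper telescopes your induction into a single step, performing one reduction $G\to G'$ and then contradicting $e(G')\geq 2\big((m-1)-1\big)t$ from \eqref{10} directly, which is precisely your scheme with the base case checked at $s=4$ instead of $s=3$.
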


\begin{proof}
By Lemma \ref{4}, we assume that the order of the considering graph $G$ is divided by $m$, that is, $|G|=mt$. By Lemma \ref{2}, we have $$e(G)\leq\frac{23}{12}mt-\frac{10}{3}.$$

Since $m\geq 5$, there exists a vertex $z\in V'_{1}$ that has two nonadjacent neighbors $y_{1}$, $y_{2}$ in $A_{1}$ by Lemma $\ref{13}$(vi). Hence by Claim \ref{14},
 \begin{align}\label{16}
  \notag e(G')&\leq e(G)-(m-1)t-2\\
  \notag &\leq\frac{23}{12}mt-\frac{10}{3}-(m-1)t-2\\
  &=\bigg(\frac{11}{12}m+1\bigg)t-\frac{16}{3}.
  \end{align}

Now, by Claim \ref{15}, proving that $G'$ admits an equitable tree-$(m-1)$-coloring is enough. Applying the edge-minimum-counterexample-arguments to $G'$, we immediately have, by \eqref{10}, that
$$e(G')\geq 2\big((m-1)-1\big)t=(2m-4)t.$$
Hence by \eqref{16}, we have $$\bigg(\frac{11}{12}m+1\bigg)t-\frac{16}{3}\geq (2m-4)t,$$ which implies that $m\leq 4$, a contradiction.
\end{proof}

\begin{thm}\label{girth-6-end}
Every plane graph with independent crossings and with girth at least 6 has an equitable partition into $m$ induced forests for each $m\geq 4$.
\end{thm}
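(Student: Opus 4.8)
The plan is to mimic the proof scheme of \autoref{girth-5-end}, which in turn imitates \autoref{girth-3} and \autoref{girth-4}: work with an edge-minimal counterexample $G\in\mathcal{G}_6$ with $|G|=mt$ (legitimate by \autoref{4}, since $\mathfrak{G}(6)=3\leq 4$), import all the structural notation of Section 3 ($x$, $x_1$, $x_2$, $V_1,\dots,V_m$, $V'_1$, $A$, $A_1$, $r$), and reduce to showing the auxiliary graph $G'$ is equitably tree-$(m-1)$-colorable.

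First I would record the edge bound: by \autoref{2} with $g=6$ we get $e(G)\leq\frac{7}{4}|G|-3=\frac{7}{4}mt-3$. Second, I need the splitting vertex $z\in V'_1$ with two nonadjacent neighbours $y_1,y_2\in A_1$. \autoref{13} as stated only supplies this for $g\in\{3,4,5\}$, so the natural move is to invoke part (ii): since $g=6\geq 4$, it suffices to check $r>\frac12(t-1)$. From \eqref{12} with $g\geq 5$ we have $r\geq(\frac54 m-3)t+\frac{19}{4}$, and for $m\geq 4$ this is $\geq 2t+\frac{19}{4}>\frac12(t-1)$; so the hypothesis of \autoref{13}(ii) holds and the desired $z,y_1,y_2$ exist. (One should double-check that \eqref{10}, hence \eqref{11}–\eqref{12}, is available here: it requires $G\in\mathcal{G}_3$ with $m\geq5$ or $G\in\mathcal{G}_5$; since $\mathcal{G}_6\subseteq\mathcal{G}_5$ we are fine.) Third, apply \autoref{14}: $e(G')\leq e(G)-(m-1)t-2\leq\frac{7}{4}mt-3-(m-1)t-2=(\frac34 m+1)t-5$, and $|G'|=(m-1)t$.

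Fourth — the crux — I must derive a contradiction with $m\leq 4$, exactly as in \autoref{girth-5-end}. Run the edge-minimal-counterexample argument on $G'$ (which lies in $\mathcal{G}_6$): if $G'$ is not equitably tree-$(m-1)$-colorable then its own version of \eqref{10} gives $e(G')\geq 2((m-1)-1)t=(2m-4)t$. Combining, $(\frac34 m+1)t-5\geq(2m-4)t$, i.e. $5t-5\geq\frac54 m t$, forcing $m\leq 4-\frac{4}{t}<4$; but $m\geq 4$, a contradiction, so $G'$ — and hence by \autoref{15} $G$ — is equitably tree-colorable as required.

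The main obstacle I anticipate is purely bookkeeping rather than conceptual: making sure the chain \eqref{10}$\Rightarrow$\eqref{11}$\Rightarrow$\eqref{12} actually applies to $G\in\mathcal{G}_6$ (it does, via $\mathcal{G}_6\subseteq\mathcal{G}_5$, but the excerpt only spells it out for $g=3$ with $m\ge5$ and $g=5$), and that the girth-$6$ Euler bound $e\le\frac74 n-3$ is tight enough to beat $(2m-4)t$ for all $m\geq 4$ — here the slack is comfortable, so no sharper structural lemma (no analogue of \autoref{6} at higher girth) should be needed. If the crude count were too weak one would have to refine the lower bound on $e(G',F_1)$ using that every vertex of $A\setminus\{y_1,y_2\}$ has at least one neighbour in $V'_1\setminus\{z\}$ together with the four special edges, but the computation above shows the straightforward argument already closes.

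\begin{proof}
By \autoref{4}, it suffices to prove that every $G\in\mathcal{G}_6$ with $|G|=mt$ ($t\geq 1$) is equitably tree-$m$-colorable for each $m\geq 4$. Suppose not, and let $G$ be an edge-minimal counterexample; we adopt all the notation of Section 3. By \autoref{2} with $g=6$,
$$e(G)\leq\frac{7}{4}mt-3.$$
Since $\mathcal{G}_6\subseteq\mathcal{G}_5$, inequality \eqref{10} holds, and hence so do \eqref{11} and \eqref{12}; in particular $r\geq\big(\frac{5}{4}m-3\big)t+\frac{19}{4}$. As $m\geq 4$, this gives $r\geq 2t+\frac{19}{4}>\frac{1}{2}(t-1)$, so by \autoref{13}(ii) there is a vertex $z\in V'_1$ with two nonadjacent neighbors $y_1,y_2\in A_1$. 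By \autoref{14},
$$e(G')\leq e(G)-(m-1)t-2\leq\frac{7}{4}mt-3-(m-1)t-2=\Big(\frac{3}{4}m+1\Big)t-5,$$
where $|G'|=(m-1)t$.

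By \autoref{15}, it is enough to show that $G'$ admits an equitable tree-$(m-1)$-coloring. Since $G'\in\mathcal{G}_6$, if this fails we may apply the edge-minimal-counterexample argument to $G'$: by \eqref{10} (with $m$ replaced by $m-1$),
$$e(G')\geq 2\big((m-1)-1\big)t=(2m-4)t.$$
Combining the two bounds on $e(G')$ yields $\big(\frac{3}{4}m+1\big)t-5\geq(2m-4)t$, that is, $5t-5\geq\frac{5}{4}mt$, whence $m\leq 4-\frac{4}{t}<4$, contradicting $m\geq 4$. Therefore $G'$ is equitably tree-$(m-1)$-colorable, and by \autoref{15} so is $G$, a contradiction.
\end{proof}
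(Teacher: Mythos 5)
Your proof is correct and follows essentially the same route as the paper's: the girth-6 Euler bound $e(G)\leq\frac{7}{4}mt-3$, the splitting vertex from Lemma \ref{13} (the paper cites part (vi) directly, which is proved exactly by the reduction to part (ii) that you spell out), Claims \ref{14}--\ref{15}, and the final count $(\frac34 m+1)t-5\geq(2m-4)t$ forcing $m\leq 3$. No substantive difference.
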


\begin{proof}
By Lemma \ref{4}, we assume that the order of the considering graph $G$ is divided by $m$, that is, $|G|=mt$. By Lemma \ref{2}, we have $$e(G)\leq\frac{7}{4}mt-3.$$

Since $m\geq 4$, there exists a vertex $z\in V'_{1}$ that has two nonadjacent neighbors $y_{1}$, $y_{2}$ in $A_{1}$ by Lemma $\ref{13}$(vi). Hence by Claim \ref{14},
 \begin{align}\label{17}
  \notag e(G')&\leq e(G)-(m-1)t-2\\
  \notag &\leq\frac{7}{4}mt-3-(m-1)t-2\\
  &=\bigg(\frac{3}{4}m+1\bigg)t-5.
  \end{align}

Now, by Claim \ref{15}, proving that $G'$ admits an equitable tree-$(m-1)$-coloring is enough. Applying the edge-minimum-counterexample-arguments to $G'$, we immediately have, by \eqref{10}, that
$$e(G')\geq 2\big((m-1)-1\big)t=(2m-4)t.$$
Hence by \eqref{17}, we have $$\bigg(\frac{3}{4}m+1\bigg)t-5\geq (2m-4)t,$$ which implies that $m\leq 3$, a contradiction.
\end{proof}

\begin{thm}\label{girth-26-end}
Every plane graph with independent crossings and with girth at least 26 has an equitable partition into $m$ induced forests for each $m\geq 3$.
\end{thm}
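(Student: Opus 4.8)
The plan is to reuse the edge-minimal-counterexample machinery from Section 3, now with $g=26$ and $m\geq 3$, and to follow the same template as the proofs of Theorem \ref{girth-5-end} and Theorem \ref{girth-6-end}: extract the forest $F_1$, pass to the smaller graph $G'$, bound $e(G')$ from above via Claim \ref{14}, bound $e(G')$ from below via \eqref{10} applied to $G'$, and derive a numerical contradiction. First I would let $G\in\mathcal{G}_{26}$ be an edge-minimal counterexample of order $mt$ with $m\geq 3$, so that all the structural conclusions of Section 3 apply. By Lemma \ref{2} with $g=26$ we have $e(G)\leq\frac{128}{96}mt-\frac{52}{24}mt$-style bound; concretely $\frac{5g-2}{4g-8}=\frac{128}{96}=\frac{4}{3}$ and $\frac{2g}{g-2}=\frac{52}{24}=\frac{13}{6}$, giving $e(G)\leq\frac{4}{3}mt-\frac{13}{6}$.

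Next I would invoke Lemma \ref{13}: since $g=26\geq 5$ and (after possibly increasing $m$ is not needed here) $m\geq 3$, condition (vi) — or more directly condition (ii), using the girth bound $g_0\geq 6$ that feeds the second branch of \eqref{12} — produces a vertex $z\in V'_1$ with two nonadjacent neighbors $y_1,y_2\in A_1$. (I should double-check that Lemma \ref{13}(vi) is literally stated for $\mathcal{G}_5$; if a clean appeal to (vi) is not available for $\mathcal{G}_{26}\subseteq\mathcal{G}_5$, I would instead verify (ii) directly from \eqref{12}, since for $g\geq 5$ and $m\geq 3$ we get $r\geq(\frac{5}{4}m-3)t+\frac{19}{4}\geq(\frac{15}{4}-3)t+\frac{19}{4}=\frac{3}{4}t+\frac{19}{4}>\frac{1}{2}(t-1)$.) Then Claim \ref{14} gives $e(G')\leq e(G)-(m-1)t-2\leq\frac{4}{3}mt-\frac{13}{6}-(m-1)t-2=\big(\frac{1}{3}m+1\big)t-\frac{25}{6}$. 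On the other side, $G'\in\mathcal{G}_{26}$ has order $(m-1)t$, so applying the edge-minimal-counterexample analysis to $G'$ and using \eqref{10} yields $e(G')\geq 2\big((m-1)-1\big)t=(2m-4)t$.

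Combining the two bounds forces $\big(\frac{1}{3}m+1\big)t-\frac{25}{6}\geq(2m-4)t$, i.e. $\frac{25}{6}\leq\big(5-\frac{5}{3}m\big)t$, which fails already for $m=3$ (the right side is $0$), and for $m\geq 4$ the right side is negative — a contradiction in every case. Hence no such counterexample exists, and together with Lemma \ref{4} (whose hypothesis $m\geq\mathfrak{G}(26)=3$ is met) every graph in $\mathcal{G}_{26}$ is equitably tree-$m$-colorable for all $m\geq 3$. The only place that needs genuine care — and the main obstacle — is confirming that the $g=26$ bound from Lemma \ref{2} is tight enough to beat the $(2m-4)t$ lower bound on $e(G')$ simultaneously for all $m\geq 3$; this is exactly why the threshold is $26$ rather than something smaller, since a weaker girth bound $\frac{5g-2}{4g-8}$ would leave the inequality satisfiable at $m=3$. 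I would also double-check that $\mathcal{G}_{26}$ genuinely inherits the hypotheses used implicitly in Section 3 (in particular that the bipartite subgraph $G[E(A,V'_1)]$ has girth $g_0\geq 6$, which is automatic here), so that \eqref{10} and \eqref{12} are available for both $G$ and $G'$.
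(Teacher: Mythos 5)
Your argument reaches a correct contradiction, but it takes a longer route than the paper. The paper's proof of this theorem never constructs $G'$ at all: since $g=26$ makes the Lemma \ref{2} coefficient drop to $\frac{4}{3}$, one already has $e(A,V'_1)\geq 2(m-1)t > \frac{4}{3}mt-\frac{13}{6}\geq e(G)$ for every $m\geq 3$, which is absurd because $E(A,V'_1)\subseteq E(G)$ -- the same one-step contradiction used in the base cases $s=5$ and $s=4$ of Theorems \ref{girth-3} and \ref{girth-4}. This is precisely why the girth threshold is $26$: it is the smallest $g$ with $\frac{5g-2}{4g-8}\leq\frac{4}{3}$. Your detour through Lemma \ref{13}, Claim \ref{14}, Claim \ref{15} and the lower bound $e(G')\geq 2\big((m-1)-1\big)t$ does give a valid contradiction numerically (your arithmetic is right), but it introduces an avoidable wrinkle: for $m=3$ you must apply the Section 3 machinery, in particular \eqref{10} via Claim \ref{5}(c), to $G'$ with only $m-1=2$ color classes, whereas Section 3 is stated under the hypothesis $m\geq 3$ for $g\geq 7$. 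The relevant claim does survive with two classes (its proof only needs one spare class to receive $x$), so this is a checkable gap rather than a fatal one, but it requires an explicit remark; the paper's direct comparison of $e(A,V'_1)$ with $e(G)$ sidesteps the issue entirely and is the cleaner argument here.
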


\begin{proof}
By Lemma \ref{4}, we assume that the order of the considering graph $G$ is divided by $m$, that is, $|G|=mt$. By Lemma \ref{2}, we have $$e(G)\leq\frac{4}{3}mt-\frac{13}{6}.$$
Hence by \eqref{10} we conclude that $e(A,V'_{1})-e(G)\geq2(m-1)t-(\frac{4}{3}mt-\frac{13}{6})>0$ for $m\geq 3$, a contradiction. It follows by the edge-minimum-counterexample-arguments that $G$ admits an equitable tree-$m$-coloring.
\end{proof}

\proof[The proof of Theorem \ref{1}]

See Theorems \ref{girth-3-end}, \ref{girth-4-end}, \ref{girth-5-end}, \ref{girth-6-end} and \ref{girth-26-end}, respectively. \hfill$\square$

\section{Remarks}

 Formerly, the minimum integer $k$ such that $G$ has an equitable partition into $k$ induced forests is the \emph{equitable vertex arboricity} of $G$, denoted by $va_{eq}(G)$, and the minimum integer $k$ such that $G$ has an equitable partition into $m$ induced forests for any $m\geq k$ is the \emph{equitable vertex arborable threshold} of $G$, denoted by $va_{eq}^*(G)$. Theorem \ref{1} actually implies that $va_{eq}^*(G)\leq \mathfrak{F}(g)$ if $G$ is a plane graph with independent crossings and with girth at least $g$.
 Precisely, choosing $g=3$, we conclude that $va_{eq}^*(G)\leq 8$ if $G$ is a plane graph with independent crossings. Here, we do not know whether the upper bound 8 for $va_{eq}^*(G)$ is sharp (actually we think that it may be improved), but this bound is acceptable at this stage, since 8 is a constant not very large. Note that the paper of Esperet, Lemoine and Maffray \cite{ELM.2015} implies that $va_{eq}^*(G)\leq 19$ if $G$ is a 1-planar graph, whose acyclic chromatic number is at most 20 \cite{Borodin.1999}.

 In 2013, Wu, Zhang and Li \cite{WZL.2013} put forward two conjectures in their paper. Although Esperet, Lemoine and Maffray \cite{ELM.2015} solved one in 2015, the other (Conjecture \ref{conj1}) is still open.

 \begin{conj}\label{conj1}
   $va_{eq}^*(G)\leq \lceil\frac{\Delta(G)+1}{2}\rceil$ for any simple graph $G$.
 \end{conj}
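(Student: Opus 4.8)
The plan is to recast the conjecture, reduce it by induction on $m$ to one genuinely hard base case, and then explain why that case resists the recoloring machinery of Section 3. Writing $\Delta=\Delta(G)$, the hypothesis $m\geq\lceil\frac{\Delta+1}{2}\rceil$ is equivalent to $\Delta\leq 2m-1$, so we fix $m\geq 1$ and prove the equivalent statement $T(m)$: ``every simple graph $G$ with $\Delta(G)\leq 2m-1$ has an equitable partition into $m$ induced forests''. As a preliminary I would want a reduction, in the spirit of Lemma \ref{4}, allowing us to assume $|G|=mt$; this step is already more delicate than in the present paper because the minimum degree of $G$ is no longer bounded by a small constant, so the naive ``add an isolated vertex and recolour'' argument has to be supplemented by transfer-chain moves of the type appearing in Claim \ref{6}.

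Granting the reduction, I would prove $T(m)$ by induction on $m$, with $T(1)$ trivial. For the step $T(m-1)\Rightarrow T(m)$ with $m\geq 2$, take $G$ with $|G|=mt$ and split on $\Delta(G)$. If $\Delta(G)\leq 2m-3$, then a maximal induced forest of $G$ has at least $\frac{2mt}{\Delta(G)+2}\geq\frac{2mt}{2m-1}>t$ vertices, because every vertex outside a maximal induced forest has at least two neighbours inside it; hence $G$ contains an induced forest $F$ of order exactly $t$, the graph $G-F$ has order $(m-1)t$ and maximum degree at most $2(m-1)-1$, so $T(m-1)$ supplies an equitable partition of $G-F$ into $m-1$ induced forests of order $t$, and adjoining $F$ completes the step. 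The leftover case $\Delta(G)\in\{2m-2,2m-1\}$, i.e. $m=\lceil\frac{\Delta+1}{2}\rceil$ exactly, is the heart of the matter: deleting a forest need not lower the maximum degree, so the induction on $m$ gives nothing, and one is forced to argue directly on an edge-minimal counterexample $G$ with $|G|=mt$, removing an edge $xx_1$, taking the equitable tree-$m$-coloring of $G-xx_1$ with classes $V_1,\dots,V_m$ of order $t$, noting that $xx_1$ lies in a monochromatic cycle so $x$ has two neighbours in some $V_1$ and at most one in some other $V_j$, and then running the transfer moves (send $x$ into $V_j$, rebalance by pushing a vertex of $V_j$ into $V_1\setminus\{x\}$, iterate through further classes) exactly as in Claims \ref{5}, \ref{6} and \ref{14}.

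The main obstacle is precisely this direct argument. In the IC-planar setting the transfer chains are forced to close because the relevant bipartite subgraphs of $G$ obey the Euler-type ceiling of Lemma \ref{2} (roughly $\frac94$ edges per vertex), so the accumulated lower bounds of the form ``every vertex of this class sends at least two edges to that class'' eventually exceed that ceiling and produce the desired contradiction. When $\Delta(G)$ may be as large as $2m-1$ there is no such ceiling: the forced inequalities are entirely consistent with the degree bound, no contradiction surfaces, and the method stalls. Removing this obstruction appears to need a genuinely new device — for instance a monovariant forcing the recoloring chains to terminate successfully irrespective of density, or a direct rebalancing of the classical (non-equitable) partition into $\lceil\frac{\Delta+1}{2}\rceil$ induced forests by local exchanges whose existence is established without any sparsity, perhaps in the spirit of the critical-structure discharging used in proofs of the Hajnal--Szemer\'edi theorem. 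Since even the first base case $m=2$ (subcubic graphs admit an equitable partition into two induced forests) already demands real work, and the general base case is open, I would not expect a short proof; settling $m=3$ ($\Delta\leq 5$) unconditionally would already be a meaningful advance, and Theorem \ref{1} of this paper should be read as exactly the corresponding statement restricted to the sparse class of plane graphs with independent crossings.
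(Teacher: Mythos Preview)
This statement is Conjecture~\ref{conj1}, which the paper records as \emph{open}: there is no proof to compare against, only a list of partial results from the literature. Your write-up is accordingly not a proof but a reduction to a hard core together with an explanation of why the machinery of Section~3 fails there, and on that level the diagnosis is broadly accurate. One genuine gap in the part you \emph{do} claim: the inductive step for the sub-case $\Delta(G)\leq 2m-3$ is valid only once the reduction to $|G|=mt$ is in hand, and you have not established that reduction. Padding with isolated vertices does not work directly (the added vertices may cluster in a single colour class, destroying equitability upon removal), and the argument of Lemma~\ref{4} genuinely relies on $\delta(G)\leq\delta(g)$ together with the density bound of Lemma~\ref{2}; with $\delta(G)$ potentially as large as $2m-1$ there is no evident substitute. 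So even your ``easy'' half is conditional on an unproved lemma, and the hard half $\Delta\in\{2m-2,2m-1\}$ you yourself concede is out of reach.

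A factual correction: you describe the case $m=3$ (that is, $\Delta\leq 5$) as open and a ``meaningful advance'', but the paper cites it as settled, since every graph with $\Delta\leq 5$ is $5$-degenerate and the conjecture is known for $5$-degenerate graphs~\cite{CGSWW.2017}; the subcubic case $m=2$ is~\cite{Z.2016}. Given the paper's citations, the first value of $m$ for which the boundary sub-case $\Delta\in\{2m-2,2m-1\}$ is genuinely open is $m=4$.
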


 As far as we know, Conjecture \ref{conj1} has been verified for complete graphs \cite{WZL.2013}, balanced complete bipartite graphs \cite{WZL.2013}, graphs with maximum degree $\Delta\geq (|G|-1)/2$ \cite{ZW.2014,ZN.2019}, graphs with maximum degree $\Delta\leq 3$ \cite{Z.2016}, 5-degenerate graphs (so graphs with maximum degree $\Delta\leq 5$) \cite{CGSWW.2017}, and $d$-degenerate graphs with maximum degree $\Delta\geq 10d$ \cite{ZNLL.2019}.

Looking back to Theorem \ref{1}, we immediately find that Conjecture \ref{conj1} holds for any plane graph with independent crossings and with maximum degree at least 14. Of course, we may do not like the lower bound 14 for the maximum degree there. If we can pull this bound down to 6, then Conjecture \ref{conj1} holds for all plane graphs with independent crossings.

Note that every plane graph with independent crossings is 6-degenerate (a graph is \emph{$k$-degenerate} if $\delta(H)\leq k$ for any $H\subseteq G$). Therefore, an alternate task is to prove Conjecture \ref{conj1} for all $6$-degenerate graphs directly. Actually, we propose the following conjecture (also see \cite{LZ.2019}).

 \begin{conj}\label{conj2}
   $va_{eq}^*(G)\leq k$ for any $k$-degenerate graph $G$.
 \end{conj}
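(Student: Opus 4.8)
The plan is to prove Conjecture~\ref{conj2} exactly in the form $va_{eq}^*(G)\le k$, i.e.\ that every $k$-degenerate graph is equitably tree-$m$-colorable for all $m\ge k$, by reusing the edge-minimal-counterexample and color-reduction machinery of Sections~3 and~4, with the planarity density estimate of Lemma~\ref{2} replaced throughout by the degeneracy bound $e(H)\le k|H|$, valid for every subgraph $H$ of a $k$-degenerate graph. Fix $k$ and argue by induction on $m\ge k$. First I would establish a reduction in the spirit of Lemma~\ref{4} so that it suffices to treat a graph $G$ of order $mt$: take a vertex $v$ with $d(v)\le k$, color $G-v$ by induction on the order, and insert $v$ into a size-$t$ class meeting $N(v)$ at most once; since at most $\lfloor k/2\rfloor$ classes can contain two neighbors of $v$ and $m\ge k$, such a class should be reachable after a short chain of transfers as in the proof of Claim~\ref{6}.

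For the inductive step ($m>k$) I would take an edge-minimal $k$-degenerate graph $G$ with $|G|=mt$ that is not equitably tree-$m$-colorable, delete an edge $xx_1$ at a vertex $x$ with $d(x)\le k$, and fix the coloring $V_1,\dots,V_m$ of $G-xx_1$ together with the class $V_1$ that contains $x,x_1,x_2$. The recoloring moves behind Claim~\ref{5} go through verbatim and force $e(v,V'_1)\ge 2$ for every $v\in A:=\cup_{i\ge2}V_i$, so with $A_1=\{v\in A:e(v,V'_1)=2\}$ and $r=|A_1|$ one obtains $e(A,V'_1)\ge 3(m-1)t-r$ exactly as in~\eqref{11}. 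The engine of the reduction is the analog of Lemma~\ref{13}: since $G$ is $k$-degenerate it contains no $K_{k+2}$, so any $z\in V'_1$ with $|N(z)\cap A_1|\ge k+1$ has two nonadjacent neighbors $y_1,y_2\in A_1$; and because $e(A_1,V'_1)=2r$ with $|V'_1|=t-1$, pigeonhole produces such a $z$ once $r>\tfrac{k}{2}(t-1)$. Given $z,y_1,y_2$, the forest $F_1=(V'_1\cup\{y_1,y_2\})\setminus\{z\}$ and the graph $G'=G[A\cup\{x,z\}\setminus\{y_1,y_2\}]$ of order $(m-1)t$ are built exactly as before; $G'$ is again $k$-degenerate, so the induction hypothesis ($m-1\ge k$) colors it, and Claims~\ref{14} and~\ref{15} recombine the colorings to contradict the choice of $G$.

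The hard part will be forcing $r$ large enough to run this pigeonhole. Combining $e(A,V'_1)\ge 3(m-1)t-r$ with the degeneracy bound $e(A,V'_1)\le k(mt-1)$ only yields $r\ge\big((3-k)m-3\big)t+k$, which can exceed $\tfrac{k}{2}(t-1)$ for large $t$ only when $k\le 2$; for $k\ge 3$ the gap between the density $2$ that recoloring forces and the density $k$ that degeneracy permits is simply too large, and this is precisely why the planar arguments in the paper reach only girth-dependent degeneracies. To cross this barrier I expect one must replace the crude count $e(v,V'_1)\ge 2$ by a structural statement about the forest $G[V'_1]$: a vertex $v\in A$ can be absorbed into $V'_1$ whenever its neighbors there lie in distinct tree components of $G[V'_1]$, so the real obstruction concentrates on vertices whose neighbors in $V'_1$ are confined to one component. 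Quantifying this component structure finely enough to manufacture the nonadjacent pair $y_1,y_2$ for every $k$ is, in my view, the crux of Conjecture~\ref{conj2}; the base case $m=k$, where no color class can be spared, is the second place where I expect the approach to demand a genuinely new idea.
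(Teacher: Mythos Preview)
The statement you are attempting to prove is Conjecture~\ref{conj2}, which the paper explicitly leaves \emph{open}; there is no proof in the paper to compare against. The paragraph following the conjecture only argues that the bound $k$ would be \emph{sharp} if the conjecture held, by exhibiting a $k$-degenerate graph with $va_{eq}^*=k$.

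Your proposal is not a proof either, and you say so yourself: the pigeonhole inequality $r>\tfrac{k}{2}(t-1)$ that you need follows from $r\ge\big((3-k)m-3\big)t+k$ only when $k\le 2$, and you correctly identify this density gap as ``the crux of Conjecture~\ref{conj2}.'' What you have written is therefore an honest explanation of why the paper's machinery does \emph{not} settle the conjecture for $k\ge 3$, rather than a proof of it.

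There is one further overclaim in your sketch worth flagging. You assert that ``the recoloring moves behind Claim~\ref{5} go through verbatim and force $e(v,V'_1)\ge 2$ for every $v\in A$.'' They do not: the move in Claim~\ref{5} needs $|N(x)\cap V_i|\le 1$ in order to place $x$ into $V_i\setminus\{v\}$, and for a vertex $x$ of degree up to $k$ there may be as many as $\lfloor(k-2)/2\rfloor$ classes $V_i$ containing two or more neighbors of $x$. In the paper this gap is closed by Claim~\ref{6}, whose proof relies essentially on the planar edge bound of Lemma~\ref{2}; for general $k$-degenerate graphs no such bound is available, so even the inequality $e(A,V'_1)\ge 2(m-1)t$ of~\eqref{10} is not secured by your argument once $k\ge 4$. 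The same issue undermines your proposed analogue of Lemma~\ref{4}, where the ``short chain of transfers as in the proof of Claim~\ref{6}'' again depends on a density bound you do not have.
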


 If Conjecture \ref{conj2} can be verified, then the bound $k$ for $va_{eq}^*(G)$ is sharp. This fact can be seen from the graph $G$ obtained from $K_{k}$ via adding $t\geq 2k-3$ vertices, each of which is adjacent to all vertices of $K_{k}$. Clearly, $G$ is $k$-degenerate.

 If $va_{eq}^*(G)\leq k-1$, then $G$ has an equitable tree-$(k-1)$-coloring $\varphi$. Under this coloring, two vertices of $K_{k}$ shall receive the same color, say 1, and all but these two vertices are not colored with 1, because otherwise a monochromatic triangle appears. Since $\varphi$ is equitable, each of the colors in $\{2,3,\ldots,k-1\}$ appears at most three times in $G$. This implies that there are at most $2+3(k-2)=3k-4$ colored vertices, contradicting the fact that $|G|=k+t\geq 3k-3$.

Let $d$ be a positive integer. An equitable \emph{$d$-defective tree-$k$-coloring} of a graph $G$ is an equitable tree-$k$-coloring of $G$ such that the subgraph induced by each color class has maximum degree at most $d$.

The minimum integer $k$ such that $G$ has an equitable $d$-defective tree-$k$-coloring
is the \emph{equitable vertex $d$-arboricity} of $G$, denoted by $va_{eq}^d(G)$, and the minimum integer $k$ such that $G$ has an equitable $d$-defective tree-$m$-coloring
for any $m\geq k$ is the \emph{equitable vertex $d$-arborable threshold} of $G$, denoted by $va_{eq}^{*d}(G)$.
In 2011, Fan \emph{et al.}\,\cite{FKLMWZ.2011} prove that $va_{eq}^{*1}(G)\leq \Delta(G)$ for any graph $G$. Recently, Zhang and Niu \cite{ZN.2019} proved that $va_{eq}^{*2}(G)\leq \lceil\frac{\Delta(G)+1}{2}\rceil$ if $G$ is a graph with $\Delta(G)\geq (|G|-1)/2$.

In the paper \cite{ELM.2015}, Esperet, Lemoine and Maffray mentioned (pointed out by Yair Caro, actually) that there does not exist a constant $c$ so that $va_{eq}^{*2}(G)\leq c$ for any planar graph $G$.
The outer-planar graph obtained from a large path by adding a universal vertex is an example supporting this conclusion.

In fact, one can easily show for any fixed integer $d\geq 1$ that $va_{eq}^{d}(G)=va_{eq}^{*d}(G)=\lceil{\frac{\Delta+1}{d}}\rceil$ if $G$ is a star with maximum degree $\Delta$. Hence, for any fixed integer $d\geq 1$, finding a constant $m$ such that every planar graph (even for outer-planar graph) has an equitable partition into $m$ induced forests with maximum degree at most $d$ is impossible. From this point of view, the ``constant'' results on the equitable vertex arboricity or the equitable vertex arborable threshold ($d=+\infty$) of planar graphs and its relative classes are very interesting.

%



\begin{thebibliography}{10}

\bibitem{Albertson.2008} M. O. Albertson. Chromatic number, independent ratio, and crossing number. \emph{Ars Math. Contemp} 1 (2008) 1--6.

\bibitem{Bondy.2008} J. A. Bondy, U. S. R. Murty. \emph{Graph Theory}. Springer, GTM 244, 2008.

\bibitem{Borodin.1984} O. V. Borodin. Solution of Ringel's problems on the vertex-face coloring of plane graphs and on the coloring of $1$-planar graphs. \emph{Diskret. Analiz} 41 (1984) 12--26.

\bibitem{Borodin.1995} O. V. Borodin. A new proof of the $6$-color theorem. \emph{J. Graph Theory} 19(4) (1995) 507--521.

\bibitem{Borodin.1999} O.V. Borodin, A.V. Kostochka, A. Raspaud, E. Sopen. Acyclic colouring of $1$-planar graphs. \emph{Discrete Appl. Math.} 114 (2001) 29--41.


\bibitem{CGSWW.2017} G. Chen, Y. Gao, S. Shan, G. Wang, J.-L. Wu. Equitable vertex arboricity of $5$-degenerate graphs. \emph{J. Comb. Optim.} 34(2) (2017) 426--432.

\bibitem{ELM.2015} L. Esperet, L. Lemoine, F. Maffray. Equitable partition of graphs into induced forests. \emph{Discrete Math.} 338 (2015) 1481--1483.

\bibitem{FKLMWZ.2011} H. Fan, H. A. Kierstead, G. Liu, T. Molla, J.-L. Wu, X. Zhang. A note on relaxed equitable coloring of graph. \emph{Inform. Process. Lett.} 111 (2011) 1062--1066.

\bibitem{Hajnal} A. Hajnal, E. Szemer\'edi. Proof of a conjecture of P. Erd\H os. In: {\it Combinatorial Theory and its Applications} (P. Erd\H os, A. R\'enyi and V. T. S\'os, eds), North-Holand, London. 1970 pp. 601--623.


\bibitem{KLM2017} S. G. Kobourov, G. Liotta, F.  Montecchiani. An annotated bibliography on 1-planarity. \emph{Comput. Sci. Rev.} 25 (2017) 49--67.

\bibitem{KS2010} D. Kr\'al, L. Stacho. Coloring plane graphs with independent crossings. \emph{J. Graph Theory} 64 (2010) 184--205.

\bibitem{LZ.2019} B. Li, X. Zhang,  Tree-coloring problems of bounded treewidth graphs, \emph{J. Comb. Optim.}, doi: 10.1007/s10878-019-00461-7.


\bibitem{Ringel.1965} G. Ringel. Ein Sechsfarbenproblem auf der Kugel. \emph{Abh. Math. Semin. Univ. Hambg.} 29 (1965) 107--117.

\bibitem{WZL.2013} J.-L. Wu, X. Zhang, H. L. Li. Equitable vertex arboricity of graphs. \emph{Discrete Math} 313 (23) (2013)  2696--2701.


\bibitem{Z2014} X. Zhang. Drawing complete multipartite graphs on the plane with restrictions on crossings. \emph{Acta Math. Sin. (Engl. Ser.)} 30(12) (2014) 2045--2053.

\bibitem{Z2015} X. Zhang. Equitable vertex arboricity of planar graphs. \emph{Taiwanese J. Math} 19 (1) (2015) 123--131.



\bibitem{Z.2016} X. Zhang. Equitable vertex arboricity of subcubic graphs. \emph{Discrete Math.} 339 (2016) 1724--1726.


\bibitem{ZN.2019} X. Zhang, B. Niu, Equitable partition of graphs into induced linear forests. \emph{J. Comb. Optim.}, doi: 10.1007/s10878-019-00498-8 .

\bibitem{ZNLL.2019} X. Zhang, B. Niu, Y. Li, B. Li.  Equitable vertex arboricity of $d$-degenerate graphs. arXiv:1908.05066 [math.CO]

    \bibitem{ZW.2014} X. Zhang, J.-L. Wu. A conjecture on equitable vertex arboricity of graphs. \emph{Filomat} 28(1) (2014) 217--219.

\end{thebibliography}
\end{document}